\newtheorem{thm}{Theorem}[section]
\newtheorem{lem}[thm]{Lemma}
\newtheorem{cor}[thm]{Corollary}
\newtheorem{pro}[thm]{Proposition}
\theoremstyle{definition}
\newtheorem{exmp}[thm]{Example}
\newtheorem{rem}[thm]{Remark}
\newtheorem{defn-rem}[thm]{Definition-Remark}
\newtheorem{ques}[thm]{Question}
\numberwithin{equation}{section}
\def\M{{\mathbb M}}
\def\X{{\mathbb X}}
\def\H{{\text {\bf H}}}
\def\ra{\longrightarrow}
\def\Z{{\mathbb Z}}
\def\Y{{\mathbb Y}}
\def\P{{\mathbb P}}
\def\ra{\rightarrow}
\def\L{{\mathbb L}}
\def\ds{\displaystyle}
\def\FF{\mathbb F}
\def\depth{{\rm depth}}
\def\k{\Bbbk}
\begin{document}

\title{The Minimal Free Resolution of A Star-Configuration in $\P^n$}

\author[J.P. Park]{Jung Pil Park}
\address{Faculty of Liberal Education, Seoul National University, Seoul, South Korea, 151-837}
\email{jppark@math.snu.ac.kr} %

\author[Y.S. SHIN]{Yong-Su Shin${}^*$}
\address{Department of Mathematics, Sungshin Women's University, Seoul, Korea, 136-742}
\email{ysshin@sungshin.ac.kr }
\thanks{${}^*$This research was supported by a grant from Sungshin Women's University in 2014.}
\thanks{${}^*$Corresponding author}

\begin{abstract}  
We find the minimal free resolution of the ideal of a star-configuration in $\P^n$ of type $(r,s)$ defined by general forms in $R=\k[x_0,x_1,\dots,x_n]$. This generalises the results of \cite{AS:1,GHM} from a specific value of $r=2$ to any value of $1\le r\le n$. Moreover, we show that any star-configuration in $\P^n$ is arithmetically Cohen-Macaulay. As an application, we construct a few of graded Artinian rings, which have the weak Lefschetz property, using the sum of two ideals of star-configurations in $\P^n$. 
\end{abstract}

\keywords{Hilbert functions, Star-configurations, Linear Star-configurations, Minimal Free Resolution, the weak Lefschetz property}
\subjclass[2010]{Primary:13A02; Secondary:16W50}

\date{\today}

\maketitle

\section{Introduction}

Let $R=\k[x_0,x_1,\dots,x_n]$ be an $(n+1)$-variable polynomial ring over an infinite field $\k$ of any characteristic, and $I$ a homogeneous ideal of $R$ (or the ideal of a subscheme in $\P^n$). Then the numerical function
$$
\H(R/I,t):=\dim_\k R_t -\dim_\k I_t
$$
is called a {\em Hilbert function} of the ring $R/I$. If $I:=I_\X$ is the ideal of a subscheme $\X$ in $\P^n$, then we denote the Hilbert function of $\X$ by 
$$
\H_\X(t):=\H(R/I_\X,t).
$$

A star-configuration has been studied to calculate the dimension of the secant variety to a variety of reducible forms in $R$. In \cite{GMS:1}, the authors first introduced it as extremal points sets with maximal Hilbert function and as the support of a family of $\binom{s}{2}$ fat points. Other applications of such star-configurations have been studied in the work of \cite{BH,CGGS, CHT,GHM,S:1,S:2}. In \cite{AS:2}, the authors proved that  
if $F_1,\dots,F_s$ are general forms in $R$ and  
$$
\tilde F_j=\frac{\prod^s_{i=1} F_i}{F_j}\quad \text{for} \quad j=1,\dots,s,
$$
then
$$
\bigcap_{1 \leq i < j \leq s} (F_i, F_j)  = (\tilde F_1,\dots,\tilde F_s).
$$
They called the variety $\X$ in $\P^n$ of the ideal $(\tilde F_1,\dots,\tilde F_s)$  a {\em  star-configuration} in $\P^n$ of type $(2,s)$ defined by general forms $F_1\ldots, F_s$. In this paper, we generalise the definition of a star-configuration in $\P^n$ as follows: Let $F_1,\dots,F_s$ be general forms in $R$ and let $1\le r\le \min\{s,n\}$. Then the variety $\X$ of the ideal 
$$
\bigcap_{1\le i_i\le \cdots \le i_r\le s}(F_{i_1},\dots,F_{i_r}) 
$$
is called {\em a star-configuration in} $\P^n$ of type $(r,s)$ defined by general forms $F_1,\dots,F_s$. Furthermore, if $F_1,\dots,F_s$ are all general {\em linear} forms, then $\X$ is called a {\em linear star-configuration}  in $\P^n$ of type $(r,s)$ (see also \cite{AS:1, AS:2,S:1,S:2}).

Recently, the minimal free resolution for the ideal $I_\X$ of some specific star-configurations were found. More precisely, in \cite{AS:2}, the authors found the minimal free resolution of the ideal of a star-configuration in $\P^n$ of type $(2,s)$. In \cite{GHM}, the authors found the minimal free resolutions of the ideal $I_\X$ of a linear star-configuration $\X$ in $\P^n$ of type $(r,s)$ and the $2$-nd symbolic power of $I_\X$. Moreover, they showed that a linear star-configuration $\X$ in $\P^n$ is arithmetically Cohen-Macaulay ({\em aCM} for short).

 In Section 2 we find the minimal generators of the ideal $I_\X$ of a star-configuration $\X$ in $\P^n$ of type $(r,s)$ defined by $s$ general forms in $R$. In Section 3  we find the minimal free resolution of the ideal $I_\X$ using Eagon-Northcott resolution (see Theorem~\ref{T:20140307-303}), which   generalises  the results of \cite{AS:1,GHM}. We also prove that every star-configuration is aCM (see Theorem~\ref{T:20140307-303}), which  generalises  the interesting result of \cite{GHM}.

As an application, we discuss the weak Lefschetz-property in Section 4.  
This fundamental property  has  been studied by many authors (see \cite{AS, AS:1,BZ, GHMS, GHS:2, HMNW, H:1,  MR:1, S:2}). In \cite{S:2}, the author proved that if $\X$ is the union of two star-configurations in $\P^2$ defined by linear forms and quadratic forms and $\sigma(\X)\ne \sigma(\Y)$, 
then $R/(I_\X+I_\Y)$ has the weak Lefschetz property. In this section we also find another Artinian ring $R/(I_\X+I_\Y)$ having the weak Lefschetz property without the condition $\sigma(\X)\ne\sigma(\Y)$ (see Proposition~\ref{P:20140322-411}). We also prove that if $\X$ and $\Y$ are star-configurations in $\P^2$ of type $(2,s)$ and $(2,s+1)$ defined by forms $F_1,\dots,F_s$, and $G_1,\dots,G_s,L$ in $R=\k[x_0,x_1,x_2]$, respectively, with
$\deg(F_i)=\deg(G_i)\le 2$ for $i=1,\dots,s$, and $L$ is a general linear form in $R$, then $R/(I_\X+I_\Y)$ has the weak Lefschetz property with a Lefschetz element $L$ (see Theorem~\ref{T:20121102-208}), which    generalises  the result of \cite{S:2}.

\section{The Minimal Generators of The Ideal of A Star-configurations in $\P^n$}

\begin{defn-rem} Let $R=\k[x_0,x_1,\dots,x_n]$ be a polynomial ring over a field $\k$. For positive integers $r$ and $s$ with $1 \le r \le \min\{n,s\}$, suppose $F_1,\dots,F_s$ are general forms in $R$ of degrees $d_1,\dots,d_s$,  respectively. We call the variety $\X$ defined by the ideal
$$
\bigcap_{1\le i_1<\cdots<i_{r}\le s} (F_{i_1},\dots,F_{i_r}) 
$$
{\em a star-configuration in $\P^n$} of type $(r,s)$. In particular, if $F_1,\dots,F_s$ are general linear forms in $R$, then we call $\X$ {\em a linear star-configuration in $\P^n$ of type $(r,s)$}. 

 Notice that each $n$-forms $F_{i_1},\dots,F_{i_n}$ define  $d_{i_1}\cdots d_{i_n}$ points in $\P^n$ for each $1\le i_1<\cdots<i_n \le s$. Thus the ideal
$$
\bigcap_{1\le i_1<\cdots<i_{n}\le s} (F_{i_1},\dots,F_{i_n}) 
$$
defines a finite set $\X$ of points in $\P^n$ with 
$$
\deg (\X) = \sum_{1\le i_1 < i_2 < \cdots < i_{n} \le s} d_{i_1} d_{i_2} \cdots d_{i_n}.
$$
 
\end{defn-rem}


\begin{lem}\label{L:20131014-202}  Let $F_1,\dots,F_s,G_1,\dots,G_t$ be general forms in $R=\k[x_0,x_1,\dots,x_n]$ with $s\ge 1$ and $n\ge 2$. 
Then 
$$
\begin{array}{llllllllllllllll}
\ds\bigcap_{1\le i \le s} (F_{i},G_1,\dots,G_t)= \left({\prod_{\ell=1}^{s} F_\ell},G_1,\dots,G_t\right)
\end{array} 
$$
for $0\le t \le n-1$.
\end{lem}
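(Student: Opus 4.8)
The containment $\supseteq$ is immediate, since $\prod_{\ell=1}^{s}F_\ell$ is a multiple of each $F_i$ and every $G_j$ lies in each ideal $(F_i,G_1,\dots,G_t)$. For the reverse containment the plan is to pass to the complete intersection quotient and argue by induction on $s$. First I would note that, as $G_1,\dots,G_t$ are general forms and $t\le n-1<\dim R$, they form a regular sequence, so $\bar R:=R/(G_1,\dots,G_t)$ is a complete intersection, hence Cohen--Macaulay and equidimensional, of dimension $n+1-t\ge 2$. Writing $\pi\colon R\to\bar R$ for the quotient map and $\bar F_i:=\pi(F_i)$, every ideal appearing in the statement contains $\ker\pi=(G_1,\dots,G_t)$, so applying $\pi$ shows that the assertion is equivalent to
$$\bigcap_{i=1}^{s}(\bar F_i)=(\bar F_1\cdots\bar F_s)\qquad\text{in }\bar R,$$
and it suffices to prove this.

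For the induction, the case $s=1$ is trivial. Assuming $\bigcap_{i=1}^{s-1}(\bar F_i)=(\bar A)$ where $\bar A:=\bar F_1\cdots\bar F_{s-1}$, I would prove that $\bar A,\bar F_s$ is a regular sequence in $\bar R$ and then invoke the elementary fact that $(a)\cap(b)=(ab)$ whenever $a,b$ is a regular sequence: if $au=bv$ then $bv\in(a)$, hence $v\in(a)$ because $b$ is a nonzerodivisor modulo $a$, so $au=bv\in(ab)$. That $\bar A$ is a nonzerodivisor follows because each $\bar F_\ell$ is a general form of positive degree and so avoids the finitely many associated primes of $\bar R$ (which, $\bar R$ being Cohen--Macaulay and equidimensional, are its minimal primes, all of dimension $n+1-t\ge 2$), and a product of nonzerodivisors is a nonzerodivisor. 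Since $\bar R$ is Cohen--Macaulay and $\bar A$ is a nonzerodivisor, $\bar R/(\bar A)$ is Cohen--Macaulay, so its associated primes are precisely the primes minimal over $(\bar A)=(\bar F_1\cdots\bar F_{s-1})$; each such prime is minimal over some $(\bar F_\ell)$ and hence has dimension at least $\dim\bar R-1\ge 1$, so it is not the irrelevant maximal ideal. As $\bar F_s$ is a general form of positive degree, it lies in none of these finitely many primes, i.e.\ it is a nonzerodivisor on $\bar R/(\bar A)$, which completes the proof that $\bar A,\bar F_s$ is a regular sequence.

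The step I expect to be the main obstacle is exactly this nonzerodivisor verification: it is where the hypothesis $t\le n-1$ enters essentially, since it forces $\dim\bar R/(\bar F_\ell)\ge 1$ so that general forms of positive degree can avoid the relevant primes, and it is where one must be careful about the meaning of ``general'': $F_s$ should be chosen after $G_1,\dots,G_t$ and $F_1,\dots,F_{s-1}$ so that it avoids the proper closed ``bad'' locus those forms cut out, which is consistent with the blanket genericity assumption on $F_1,\dots,F_s,G_1,\dots,G_t$. The remaining ingredients --- the reduction to $\bar R$, the base case, and the regular-sequence computation --- are routine.
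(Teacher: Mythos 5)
Your proof is correct and follows essentially the same route as the paper's: induction on $s$, with the inductive step resting on the fact that $F_1\cdots F_{s-1}$ and $F_s$ form a regular sequence modulo $(G_1,\dots,G_t)$, which is exactly where $t\le n-1$ is used. The only difference is presentational --- the paper compares two explicit representations of an element of the intersection inside $R$ and justifies the key nonzerodivisor step with the one-line remark that the relevant forms are general, whereas you pass to $\bar R=R/(G_1,\dots,G_t)$, invoke $(a)\cap(b)=(ab)$ for a regular sequence, and actually verify the nonzerodivisor claims via associated primes, which is the more careful treatment of the same idea.
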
 

\begin{proof}
We shall prove this by induction on $s$. If $s=1$, then it is clear. So we assume that $s>1$. Then, by induction on $s$, 
$$
\begin{array}{llllllllllllllllllllllllllllllll} 
&    & { \bigcap}_{1\le i \le s} (F_{i},G_1,\dots,G_t) \\[.5ex] 
& = & \big[{ \bigcap}_{1\le i\le s-1} (F_{i},G_1,\dots,G_t)\big]  \bigcap 
          (F_s,G_1,\dots,G_t)   \\[.5ex] 
& = &  \big({\prod_{\ell=1}^{s-1} F_\ell},G_1,\dots,G_t\big) \bigcap  (F_s,G_1,\dots,G_t).  
\end{array} 
$$
First, it is obvious that 
$$
\begin{matrix}
\big({\prod_{\ell=1}^{s} F_\ell},G_1,\dots,G_t\big) \subseteq 
          \big({\prod_{\ell=1}^{s-1} F_\ell},G_1,\dots,G_t\big) \bigcap (F_s,G_1,\dots,G_t).
\end{matrix} 
$$
Conversely, assume that $F\in \big({\prod_{\ell=1}^{s-1} F_\ell},G_1,\dots,G_t\big) \bigcap (F_s,G_1,\dots,G_t)$. Then, for some $H_i,K_i,L_i,M_i\in R$, 
$$
\begin{array}{llllllllllllllllllllllllllllll}
F =  \big({\prod_{\ell=1}^{s-1} F_\ell} \big)H_1+K_1G_1+\cdots+K_tG_t=F_{s}M_{s}+L_1G_1+\cdots+L_tG_t.
\end{array}
$$
Moreover, since $n\ge t+1$, the forms $\prod_{\ell=1}^{s-1} F_\ell$, $F_{s},G_1,\dots,G_t$  are general forms in $R$.  We thus get that
$H_1\in (F_s,G_1,\dots,G_t)$, and $F\in \big({\prod_{\ell=1}^{s} F_\ell},G_1,\dots,G_t\big)$, as we wished. 
\end{proof} 

\begin{thm} \label{T:20140302-103} 
Let $F_1,\dots,F_s,G_1,\dots,G_t$ be general forms in $R=\k[x_0,x_1,\dots,x_n]$ {for $s \ge 2$, $t\ge 0$, and $n\ge 2$}. Then 
$$
\bigcap_{1\le j_1<\cdots<j_{r}\le s} (F_{j_1},\dots,F_{j_{r}},G_1,\dots,G_t)= \sum_{1\le i_1<\cdots<i_{r-1} \le s}  \left(\frac{\prod_{\ell=1}^{s} F_\ell}{F_{i_1}\cdots F_{i_{r-1}}},G_1,\dots,G_t\right)
$$
for $1 \le r \le n-t$.  
\end{thm}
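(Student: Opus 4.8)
The plan is to prove the identity by induction on $r$, with Lemma~\ref{L:20131014-202} furnishing the base case $r=1$ (where the left side is $\bigcap_{1\le j\le s}(F_j,G_1,\dots,G_t)$ and the right side is the single ideal $(\prod_\ell F_\ell, G_1,\dots,G_t)$, so the two coincide exactly by that lemma, using $t\le n-1$). For the inductive step, fix $2\le r\le n-t$ and suppose the statement holds for $r-1$ (for every admissible choice of ambient variables and auxiliary forms). The key observation is that the big intersection over all $r$-subsets $\{j_1<\dots<j_r\}$ of $\{1,\dots,s\}$ can be grouped according to whether $j_r=s$ or $j_r<s$; more efficiently, I would fix the index $s$ and write
$$
\bigcap_{1\le j_1<\cdots<j_r\le s}(F_{j_1},\dots,F_{j_r},G_1,\dots,G_t)
= \Big(\bigcap_{1\le j_1<\cdots<j_r\le s-1}(F_{j_1},\dots,F_{j_r},G_*)\Big)\ \cap\ \Big(\bigcap_{1\le j_1<\cdots<j_{r-1}\le s-1}(F_{j_1},\dots,F_{j_{r-1}},F_s,G_*)\Big),
$$
where $G_*$ abbreviates $G_1,\dots,G_t$. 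To the first factor apply the induction on $s$ (the outer induction inside Theorem~\ref{T:20140302-103}'s statement is really a double induction on $(r,s)$, mirroring Lemma~\ref{L:20131014-202}); to the second factor apply the inductive hypothesis for $r-1$ with the enlarged auxiliary set $G_1,\dots,G_t,F_s$ — this is legitimate because $r-1\le n-(t+1)+ ?$ needs checking, and indeed $r\le n-t$ gives $r-1\le n-(t+1)$, so the hypothesis applies with $t+1\le n-1$ auxiliary forms. This rewrites each factor as a sum of ideals of the form $(\frac{\prod F_\ell}{F_{i_1}\cdots F_{i_{r-1}}}, G_*)$ (respectively with one of the omitted indices forced to avoid or to be $s$), and then one reconciles the two sums with the claimed right-hand side $\sum_{1\le i_1<\cdots<i_{r-1}\le s}(\frac{\prod_\ell F_\ell}{F_{i_1}\cdots F_{i_{r-1}}},G_*)$.

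The $\supseteq$ containment should be the easy direction: each generator $\frac{\prod_\ell F_\ell}{F_{i_1}\cdots F_{i_{r-1}}}$ is visibly divisible by $F_{j_k}$ for every $j_k\notin\{i_1,\dots,i_{r-1}\}$, and since any $r$-subset $\{j_1,\dots,j_r\}$ must contain at least one index outside the $(r-1)$-subset $\{i_1,\dots,i_{r-1}\}$, that monomial lies in $(F_{j_1},\dots,F_{j_r},G_*)$; hence the right-hand sum is contained in every term of the left-hand intersection. The substantive direction is $\subseteq$. Here I would take $F$ in the intersection, use the two-factor decomposition above together with the inductive descriptions of each factor, and argue that the resulting membership conditions — "$F$ lies in a sum indexed by $(r-1)$-subsets of $\{1,\dots,s-1\}$" and "$F$ lies in a sum indexed by $(r-1)$-subsets of $\{1,\dots,s-1\}$ together with the forced index $s$" — can be combined. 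The mechanism, exactly as in the proof of Lemma~\ref{L:20131014-202}, is that when we write $F$ in two ways via the two factors and subtract, the coefficient of a pure-power term like $\frac{\prod_\ell F_\ell}{F_{i_1}\cdots F_{i_{r-1}}}$ must then lie in an ideal generated by the remaining $F$'s and the $G$'s, and the genericity of $F_1,\dots,F_s,G_1,\dots,G_t$ — which guarantees these form a regular sequence in every relevant localization as long as their number does not exceed $n+1$, and here $r+t\le n$ keeps us safely under that bound — forces the desired divisibility.

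The main obstacle, and the step I would spend the most care on, is the bookkeeping in the $\subseteq$ direction: tracking which $(r-1)$-subsets appear in each of the two factors after applying the inductive hypothesis, and checking that every $(r-1)$-subset $\{i_1,\dots,i_{r-1}\}\subseteq\{1,\dots,s\}$ of the target sum is accounted for — those avoiding $s$ come from the first factor's description after one more reduction, those containing $s$ come from the second. A clean way to avoid index chaos is to instead run a direct induction on $s$ (as in Lemma~\ref{L:20131014-202}) at fixed $r$, peeling off $F_s$ one form at a time and invoking Lemma~\ref{L:20131014-202} repeatedly as the true engine; the regular-sequence / genericity input is identical in spirit to what already appears there, namely that $\prod_{\ell\in S}F_\ell$, the remaining $F$'s, and $G_1,\dots,G_t$ behave like independent generic forms precisely because their count stays at most $n$, which is exactly the hypothesis $r\le n-t$. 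I expect no genuinely new idea beyond Lemma~\ref{L:20131014-202} is needed — only a disciplined induction and careful indexing.
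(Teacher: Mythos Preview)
Your proposal is correct and follows essentially the same route as the paper: double induction on $(r,s)$, the same splitting of the intersection into the part with $j_r\le s-1$ and the part with $j_r=s$, application of the inductive hypotheses to each factor (the second via the enlarged auxiliary set $G_1,\dots,G_t,F_s$, using exactly your check $r-1\le n-(t+1)$), and the $\subseteq$ direction handled by showing each coefficient $K_{i_1\cdots i_{r-1}}$ lies in $(F_{i_1},\dots,F_{i_{r-1}},F_s,G_1,\dots,G_t)$ via genericity with $r+t\le n$. The paper's mechanism for that last step is not literally subtraction but rather observing that all other summands in the first representation already lie in $(F_{i_1},\dots,F_{i_{r-1}})$ while the second representation places $F$ in $(F_{i_1},\dots,F_{i_{r-1}},F_s,G_*)$; this is precisely the ``disciplined indexing'' you anticipated.
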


\begin{proof} We shall prove this by double induction on $r$ and $s$. If $r=1$, then by Lemma~\ref{L:20131014-202}, it holds for every $s\ge 1$. 
Now assume $r>1$. If $s=2$, then it is immediate. Let $s>2$. By double induction on $s$ and $r$, 
$$
\begin{array}{llllllllllllllllllll}
    & \ds \bigcap_{1\le j_1<\cdots<j_{r}\le s} (F_{j_1},\dots,F_{j_{r}},G_1,\dots,G_t) \\[3ex] 
= & \ds\Big[\bigcap_{1\le j_1<\cdots<j_{r}\le s-1} (F_{j_1},\dots,F_{j_{r}},G_1,\dots,G_t) \Big]\bigcap 
          \Big[\bigcap_{1\le j_1<\cdots<j_{r-1}\le s-1} (F_{j_1},\dots,F_{j_{r-1}},F_{s},G_1,\dots,G_t) \Big] \\
= &\ds\Big[\ds\sum_{1\le i_1<\cdots<i_{r-1} \le s-1}  \Big(\frac{\prod_{\ell=1}^{s-1} F_\ell}{F_{i_1}\cdots F_{i_{r-1}}},G_1,\dots,G_t\Big)\Big]
\bigcap \Big[\ds\sum_{1\le i_1<\cdots<i_{r-2} \le s-1}  \Big(\frac{\prod_{\ell=1}^{s-1} F_\ell}{F_{i_1}\cdots F_{i_{r-2}}},F_s,G_1,\dots,G_t\Big)\Big].
\end{array} 
$$

First, note that
$$
\begin{array}{llll}  
& \ds\sum_{1\le i_1<\cdots<i_{r-1} \le s}  \Big(\frac{\prod_{\ell=1}^{s} F_\ell}{F_{i_1}\cdots F_{i_{r-1}}},G_1,\dots,G_t\Big)  \\[1ex] 
\subseteq &
\ds\Big[\ds\sum_{1\le i_1<\cdots<i_{r-1} \le s-1}  \Big(\frac{\prod_{\ell=1}^{s-1} F_\ell}{F_{i_1}\cdots F_{i_{r-1}}},G_1,\dots,G_t\Big)\Big]
\bigcap \Big[\ds\sum_{1\le i_1<\cdots<i_{r-2} \le s-1}  \Big(\frac{\prod_{\ell=1}^{s-1} F_\ell}{F_{i_1}\cdots F_{i_{r-2}}},F_s,G_1,\dots,G_t\Big)\Big].
\end{array}
$$
Conversely, assume that 
$$
\begin{matrix} 
F\in \ds\Big[\ds\sum_{1\le i_1<\cdots<i_{r-1} \le s-1}  \Big(\frac{\prod_{\ell=1}^{s-1} F_\ell}{F_{i_1}\cdots F_{i_{r-1}}},G_1,\dots,G_t\Big)\Big]
\bigcap \Big[\ds\sum_{1\le i_1<\cdots<i_{r-2} \le s-1}  \Big(\frac{\prod_{\ell=1}^{s-1} F_\ell}{F_{i_1}\cdots F_{i_{r-2}}},F_s,G_1,\dots,G_t\Big)\Big].
\end{matrix} 
$$
Then for some $K_{i_1\cdots i_{r-1}}, M_{j_1\cdots j_{r-2}},H_i,L_i\in R$,

\begin{align}
F 
=  & \ds\sum_{1\le i_1<\cdots<i_{r-1} \le s-1} \frac{\prod_{\ell=1}^{s-1} F_\ell}{F_{i_1}\cdots F_{i_{r-1}}}\cdot K_{i_1\cdots i_{r-1}}+\sum^t_{i=1} H_iG_i
\label{EQ:20140303-101}  \\ 
=  &\ds\sum_{1\le j_1<\cdots<j_{r-2} \le s-1} \frac{\prod_{\ell=1}^{s-1} F_\ell}{F_{j_1}\cdots F_{j_{r-2}}}\cdot M_{j_1\cdots j_{r-2}}+F_sM_s
+\sum^t_{i=1} L_iG_i. \label{EQ:20140303-102} 
\end{align} 

\medskip
 
 Now we first show that, using two representation of $F$ given in \eqref{EQ:20140303-101} and \eqref{EQ:20140303-102}, $K_{i_1\ldots i_{r-1}}$ belongs to the ideal $(F_{i_1}, \dots, F_{i_{r-1}}, F_s, G_1, \dots, G_t)$. Indeed if $(i_1',i_2',\dots,i_{r-1}')\ne (i_1,i_2,\dots,i_{r-1})$, then  
$$
\frac{\prod_{\ell=1}^{s-1} F_\ell}{F_{i_1'}F_{i_2'} \cdots F_{i_{r-1}'}} \in (F_{i_1},F_{i_2},\dots,F_{i_{r-1}}).
$$
So,
\begin{equation} \label{EQ:20140307-110} 
\begin{array}{llllllll}
&    \ds\sum_{\substack{
1\le i_1'<i_{2}'<\cdots<i_{r-1}'\le s-1\\ (i_1',i_2',\dots,i_{r-1}')\ne (i_1,i_2,\dots,i_{r-1})}
} \frac{\prod_{\ell=1}^{s-1} F_\ell}{F_{i_1'}F_{i_2'} \cdots F_{i_{r-1}'}}\cdot K_{i_1'i_2'\cdots i_{r-1}'} \in (F_{i_1},F_{i_2},\dots,F_{i_{r-1}}).
\end{array} 
\end{equation}
Moreover, we have that, for every $1\le j_1<\cdots <j_{r-2}\le s-1$,
$$
\begin{array}{llllllll}
\{i_1,\dots,i_{r-1}\}\backslash \{j_1,\dots,j_{r-2}\}\ne \varnothing, \text{ i.e., } \\
\ds\sum_{1\le j_1<\cdots<j_{r-2} \le s-1} \frac{\prod_{\ell=1}^{s-1} F_\ell}{F_{j_1}\cdots F_{j_{r-2}}}\cdot M_{j_1\cdots j_{r-2}}
\in (F_{i_1},F_{i_2},\dots,F_{i_{r-1}}).
   \end{array} 
$$
It follows from the second representation of $F$ in the equation~\eqref{EQ:20140303-102} that 
\begin{equation} \label{EQ:20131022-223} 
\begin{array}{lllllllllll}
F
 & =  & \ds\sum_{1\le j_1<\cdots<j_{r-2} \le s-1} \frac{\prod_{\ell=1}^{s-1} F_\ell}{F_{j_1}\cdots F_{j_{r-2}}}\cdot M_{j_1\cdots j_{r-2}}+F_sM_s +\sum^t_{i=1} L_iG_i.\\
  & \in &  (F_{i_1},F_{i_2},\dots,F_{i_{r-1}},F_s,G_1,\dots,G_t).
\end{array} 
\end{equation}    
Recall that $F_{i_1},F_{i_2},\dots,F_{i_{r-1}},F_s,G_1,\dots,G_t$ are $(r+t)$-general forms in $R$ with $r+t\le n$. Hence it follows from the equations~\eqref{EQ:20140303-101}, \eqref{EQ:20140307-110},  and \eqref{EQ:20131022-223} that
$$
K_{i_1i_2\cdots i_{r-2}i_{r-1}}=F_{i_1}N_{i_1}+F_{i_2}N_{i_2}+\cdots+F_{i_{r-1}}N_{i_{r-1}}+F_sN_s+\sum^t_{i=1} N_i'G_i
$$
for some $N_{i_j},N_i' \in R$. Thus we have
\begin{equation} \label{EQ:20131022-224} 
\begin{array}{llllllllllll}
   & \frac{\prod_{\ell=1}^{s-1} F_\ell}{F_{i_1}F_{i_2} F_{i_3}\cdots F_{i_{r-2}}F_{i_{r-1}}}\cdot K_{i_1i_2i_3\cdots i_{r-2}i_{r-1}} \\[1.2ex]
= & \frac{\prod_{\ell=1}^{s-1} F_\ell}{F_{i_1}F_{i_2} F_{i_3}\cdots F_{i_{r-2}}F_{i_{r-1}}}\cdot(F_{i_1}N_{i_1}+F_{i_2}N_{i_2}+F_{i_3}N_{i_3}+\cdots+F_{i_{r-1}}N_{i_{r-1}}+F_sN_s+\sum^t_{i=1} N_i'G_i)\\[1.2ex] 
=& \frac{\prod_{\ell=1}^{s} F_\ell}{F_{i_2} F_{i_3}\cdots F_{i_{r-2}}F_{i_{r-1}}F_s}\cdot N_{i_1} +
\frac{\prod_{\ell=1}^{s} F_\ell}{F_{i_1}  F_{i_3}\cdots F_{i_{r-2}}F_{i_{r-1}}F_s}\cdot  N_{i_2} +
\frac{\prod_{\ell=1}^{s} F_\ell}{F_{i_1}F_{i_2} F_{i_4} \cdots F_{i_{r-2}}F_{i_{r-1}}F_s}\cdot  N_{i_3} + \cdots+\\[1.2ex] 
&  \frac{\prod_{\ell=1}^{s} F_\ell}{F_{i_1}F_{i_2} F_{i_3}\cdots F_{i_{r-2}} F_s}\cdot  N_{i_{r-1}} +
 \frac{\prod_{\ell=1}^{s} F_\ell}{F_{i_1}F_{i_2} F_{i_3}\cdots F_{i_{r-2}}F_{i_{r-1}}} \cdot N_s +\frac{\prod_{\ell=1}^{s-1} F_\ell}{F_{i_1}F_{i_2} F_{i_3}\cdots F_{i_{r-2}}F_{i_{r-1}}} \cdot \sum^t_{i=1} N_i'G_i\\[1.2ex] 
 \in  & {\ds\sum}_{1\le k_1<\cdots<k_{r-1} \le s}  \Big(\frac{\prod_{\ell=1}^{s} F_\ell}{F_{k_1}\cdots F_{k_{r-1}}},G_1,\dots,G_t\Big).
\end{array}
\end{equation} 
This holds for arbitrary chosen $(i_1,\ldots,i_{r-1})$ with $1\le i_1 < \cdots < i_{r-1} \le s$. This implies that 
$$
F 
 \in  \ds\sum_{1\le i_1<\cdots<i_{r-1} \le s}  \bigg(\frac{\prod_{\ell=1}^{s} F_\ell}{F_{i_1}\cdots F_{i_{r-1}}},G_1,\dots,G_t\bigg),
$$
as we wished. This completes the proof. 
\end{proof}

The following corollary is immediate from Theorem~\ref{T:20140302-103} .

\begin{cor}\label{C:20131022-205}  Let $\X$ be a star-configuration in $\P^n$ of type $(r,s)$ defined by general forms $F_1,\dots,F_s$ in $R=\k[x_0,x_1,\dots,x_n]$ with $1\le r\le \min\{s,n\}$. Then
$$
I_\X=\bigcap_{1\le i_1<\cdots <i_r\le s}(F_{i_1},\dots,F_{i_r})=\sum_{1\le j_1<\cdots<j_{r-1}\le s} \left(\frac{\prod^s_{\ell=1}F_\ell}{F_{j_1}\cdots F_{j_{r-1}}}\right). 
$$
\end{cor}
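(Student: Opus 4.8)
The final statement, Corollary~\ref{C:20131022-205}, should follow almost immediately from Theorem~\ref{T:20140302-103}, so the plan is essentially to extract the correct special case and observe that the star-configuration definition gives exactly the left-hand side.

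First I would apply Theorem~\ref{T:20140302-103} with $t=0$, so that there are no auxiliary forms $G_1,\dots,G_t$. With $t=0$ the hypothesis $1\le r\le n-t$ becomes $1\le r\le n$, which together with the standing assumption $r\le s$ matches the range $1\le r\le\min\{s,n\}$ in the corollary; I should also note that the cases where $s<2$ or $n<2$ are degenerate and either vacuous or trivial, so assuming $s\ge 2$ and $n\ge 2$ as in the theorem loses nothing. Under this specialization the theorem reads
$$
\bigcap_{1\le j_1<\cdots<j_r\le s}(F_{j_1},\dots,F_{j_r})=\sum_{1\le i_1<\cdots<i_{r-1}\le s}\left(\frac{\prod_{\ell=1}^s F_\ell}{F_{i_1}\cdots F_{i_{r-1}}}\right),
$$
which is exactly the asserted identity between the two descriptions of $I_\X$.

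Second, I would recall from the Definition-Remark that, by definition, the ideal of a star-configuration $\X$ in $\P^n$ of type $(r,s)$ defined by $F_1,\dots,F_s$ is precisely $\bigcap_{1\le i_1<\cdots<i_r\le s}(F_{i_1},\dots,F_{i_r})$; hence $I_\X$ equals the left-hand side above. Combining this with the displayed specialization of Theorem~\ref{T:20140302-103} yields all three expressions in the statement of the corollary, completing the proof.

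There is really no main obstacle here: the entire content is in Theorem~\ref{T:20140302-103}, and the only things to check are the bookkeeping of the index ranges (making sure $t=0$ converts the theorem's constraint into the corollary's constraint) and the identification of $I_\X$ with the intersection via the definition. If one wanted to be fully careful, the one point worth a sentence is why $r=1$ is included: for $r=1$ the right-hand sum is the single ideal $\left(\prod_{\ell=1}^s F_\ell\right)$ and the left-hand intersection is $\bigcap_{i}(F_i)=\left(\prod_\ell F_\ell\right)$ since the $F_i$ are general (pairwise coprime), consistent with the $r=1$ base case already handled inside the proof of Theorem~\ref{T:20140302-103} via Lemma~\ref{L:20131014-202}.
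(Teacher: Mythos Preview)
Your proposal is correct and matches the paper's approach exactly: the paper simply states that the corollary is immediate from Theorem~\ref{T:20140302-103}, which is precisely the specialization $t=0$ you carry out. Your extra remarks on the degenerate cases and the $r=1$ situation are fine but not needed beyond what the theorem and Lemma~\ref{L:20131014-202} already cover.
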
 


 \begin{cor} \label{C:20121023-301} Let $\X$ be a linear star-configuration in $\P^n$ of type $(n,s)$ defined by $s$ general linear forms in $R=\k[x_0,\dots,x_n]$ with $s\ge n\ge 2$. Then $\X$ has  generic Hilbert function
$$
\begin{array}{lllllllllllllllllllllll}
\H_\X & : & 1 & \ds\binom{1+n}{n} & \cdots &\ds \binom{(s-n)+n}{n} & \ds\binom{(s-n)+n}{n} & \ra,
\end{array} 
$$
i.e.,
$$
\H_\X(i)=\min\bigg\{\deg(\X), \binom{i+n}{n}\bigg\}
$$
for every $i\ge 0$. 
\end{cor}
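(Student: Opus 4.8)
The plan is to read off all the numbers directly from the explicit description of $I_\X$ in Corollary~\ref{C:20131022-205}, combined with two standard upper bounds on the Hilbert function of a reduced finite point scheme. First, since the $F_\ell$ are general linear forms, any $n$ of them have a single common zero in $\P^n$ and distinct $n$-subsets yield distinct points, so $\X$ is a reduced set of exactly $\binom{s}{n}$ points; hence $\deg(\X)=\binom{s}{n}$ by the degree formula of the Definition-Remark (all $d_i=1$). The two bounds $\H_\X(i)\le\dim_\k R_i=\binom{i+n}{n}$ and $\H_\X(i)\le\deg(\X)$ (the evaluation map at the points identifies $(R/I_\X)_i$ with a subspace of $\k^{\deg(\X)}$) already give $\H_\X(i)\le\min\{\deg(\X),\binom{i+n}{n}\}$ for every $i\ge 0$, so only the reverse inequality needs work.

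For the range $0\le i\le s-n$ I would use Corollary~\ref{C:20131022-205}: the ideal $I_\X$ is generated by the forms $\big(\prod_{\ell=1}^{s}F_\ell\big)/(F_{j_1}\cdots F_{j_{r-1}})$ with $r=n$, each of which is a product of $s-(n-1)=s-n+1$ linear forms and hence has degree exactly $s-n+1$. Therefore $(I_\X)_i=0$ for all $i\le s-n$, so $\H_\X(i)=\binom{i+n}{n}$ throughout that range; in particular $\H_\X(s-n)=\binom{(s-n)+n}{n}=\binom{s}{n}=\deg(\X)$.

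For $i\ge s-n$ I would invoke the fact that the Hilbert function of a reduced zero-dimensional scheme is non-decreasing --- or reprove it in one line: choosing a linear form $L$ vanishing at no point of $\X$, multiplication by $L$ induces an injection $(R/I_\X)_i\hookrightarrow(R/I_\X)_{i+1}$, because $Lf\in I_\X$ forces $f$ to vanish on $\X$, hence $f\in I_\X$ by reducedness. Since $\H_\X$ is non-decreasing, bounded above by $\deg(\X)$, and already equals $\deg(\X)$ at $i=s-n$, it equals $\deg(\X)=\binom{s}{n}$ for all $i\ge s-n$. Assembling the two ranges gives $\H_\X(i)=\min\{\deg(\X),\binom{i+n}{n}\}$, which is exactly the displayed sequence $1,\binom{1+n}{n},\dots,\binom{(s-n)+n}{n},\binom{(s-n)+n}{n},\to$.

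I do not anticipate a genuine obstacle: once Corollary~\ref{C:20131022-205} is in hand the argument is essentially bookkeeping with binomial coefficients. The only point deserving care is the opening claim that $\X$ is a reduced set of $\binom{s}{n}$ points --- i.e., that $n$ general linear forms in $n+1$ variables have exactly one common zero and that the $\binom{s}{n}$ points so obtained are pairwise distinct --- since this is precisely where the genericity of the $F_\ell$ enters.
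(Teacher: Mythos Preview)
Your proof is correct and follows essentially the same route as the paper: both use Corollary~\ref{C:20131022-205} to see that all generators of $I_\X$ lie in degree $s-n+1$, so $\H_\X(i)=\binom{i+n}{n}$ for $i\le s-n$ and in particular $\H_\X(s-n)=\binom{s}{n}=\deg(\X)$. The only cosmetic difference is in the stabilization step: the paper computes $\H_\X(s-n+1)=\binom{s+1}{n}-\binom{s}{n-1}=\binom{s}{n}$ directly from the generator count, while you invoke the standard monotonicity-plus-degree-bound argument for reduced finite point sets.
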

\begin{proof}    By Corollary~\ref{C:20131022-205},  $I_\X$ has exactly $\binom{s}{n-1}$-generators in degree $s-(n-1)$. Thus $\H_\X(i) = \binom{i+n}{n}$ for $i \le s-n$. Moreover, since
$$ 
\H_\X(s-(n-1)) = \binom{s+1}{n} - \binom{s}{n-1} = \binom{(s-n)+n}{n} = \deg(\X),
$$
we see that $\X$ has generic Hilbert function, as we wished.  
\end{proof} 

The following example shows that the Hilbert function of a  star-configuration in $\P^n$ of type $(n,s)$ is not generic in general. 
 
\begin{exmp} Consider a star-configuration in $\P^3$  of type $(3,3)$ defined by $3$ general quadratic forms. In this case, $I_\X$ has exactly $3$ generators in degree $2$, i.e., $\H_\X (2) = \binom{5}{3} - 3 = 7$. But $\deg(\X) = 8$, and so the Hilbert function of $\X$ is
$$
\begin{array}{llllllllllllllllllllll}
\H_\X & : & 1 & 4 & 7 & 8 & \ra, 
\end{array} 
$$
which is not generic. 
\end{exmp}

\section{The Minimal Free Resolution of A  Star-configuration in $\P^n$}

We first recall from \cite{KMMNP} the following result. 

\begin{pro} \label{P:201450307-201} Let $I_C$ be a saturated ideal defining a codimension $c$ subscheme $C\subseteq \P^n$. Let $I_S\subset I_C$ be an ideal which defines  an aCM subscheme $S$ of codimension $c-1$. Let $F$ be a form of degree $d$ which is not a zero divisor on $R/I_S$. Consider the ideal $I'=F\cdot I_C+I_S$ and let $C'$ be the subscheme it defines. Then $I'$ is saturated, hence equal to $I_{C'}$, and there is an exact sequence
$$
0\to I_S(-d) \to I_C (-d) \oplus I_S \to I_{C'} \to 0.
$$
In particular, since $S$ is an aCM subscheme of codimension one less than $C$, we see that $C'$ is an aCM subscheme if and only if $C$ is. Also 
$$
\deg C'=\deg C+(\deg F) \cdot (\deg S).
$$
Furthermore, as sets on $S$, we have $C'=C' \cup H_F$, where $H_F$ is the hyper surface section cut out on $S$ by $F$. The Hilbert function $\H_{C'}$ of $R/I_{C'}$ is 
$$
\H_{C'}(t)=\H_S(t)-\H_S(t-d)+\H_C(t-d). 
$$
\end{pro}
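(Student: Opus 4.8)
The plan is to extract the short exact sequence directly from the definition $I'=F\cdot I_C+I_S$ and then deduce everything else from it. First I would consider the graded map of $R$-modules
$$
\varphi\colon I_C(-d)\oplus I_S\longrightarrow R,\qquad \varphi(a,b)=Fa+b,
$$
whose image is exactly $I'=F\cdot I_C+I_S$. Since $I_S\subseteq I_C$, the rule $a\mapsto(a,-Fa)$ defines a graded map $\psi\colon I_S(-d)\to I_C(-d)\oplus I_S$ with image contained in $\ker\varphi$. Conversely, if $\varphi(a,b)=0$ then $b=-Fa$, so $Fa=-b\in I_S$; because $F$ is a nonzerodivisor on $R/I_S$ this forces $a\in I_S$, hence $(a,b)=\psi(a)$. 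Therefore
$$
0\longrightarrow I_S(-d)\xrightarrow{\psi} I_C(-d)\oplus I_S\xrightarrow{\varphi} I'\longrightarrow 0
$$
is exact; note this uses only the nonzerodivisor hypothesis, not saturatedness of $I'$.

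Next I would prove $I'$ is saturated. Comparing the sequence above with $0\to R(-d)\to R(-d)\oplus R\to R\to 0$ (the same maps on the ambient ring) via the inclusions $I_S(-d)\hookrightarrow R(-d)$, $I_C(-d)\oplus I_S\hookrightarrow R(-d)\oplus R$, $I'\hookrightarrow R$, a standard diagram chase (the snake lemma) yields a short exact sequence of quotients
$$
0\longrightarrow (R/I_S)(-d)\xrightarrow{(\pi,-F)}(R/I_C)(-d)\oplus(R/I_S)\longrightarrow R/I'\longrightarrow 0,
$$
where $\pi\colon R/I_S\to R/I_C$ is the canonical surjection. Now apply local cohomology with respect to the irrelevant ideal $\mathfrak m$. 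Since $I_C$ and $I_S$ are saturated, $H^0_{\mathfrak m}(R/I_C)=H^0_{\mathfrak m}(R/I_S)=0$; and since $S$ is aCM of codimension $c-1$, $\depth(R/I_S)=n+2-c\ge 2$, so $H^1_{\mathfrak m}(R/I_S)=0$. The long exact sequence then gives $H^0_{\mathfrak m}(R/I')\hookrightarrow H^1_{\mathfrak m}(R/I_S)(-d)=0$, so $I'$ is saturated; thus $I'=I_{C'}$, and the two displayed sequences become exactly those in the statement. I expect this saturation step to be the main obstacle, as it is the only place the aCM hypothesis on $S$ is genuinely used.

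Everything remaining is formal bookkeeping on the quotient sequence. Additivity of Hilbert functions gives
$$
\H_{C'}(t)=\H(R/I_C,t-d)+\H(R/I_S,t)-\H(R/I_S,t-d)=\H_S(t)-\H_S(t-d)+\H_C(t-d),
$$
which is the asserted formula. Passing to Hilbert polynomials and using $\dim S=n-c+1$, the function $\H_S(t)-\H_S(t-d)$ is eventually a polynomial of degree $n-c$ with leading coefficient $d\cdot\deg S/(n-c)!$; adding $\H_C(t-d)$, of degree $n-c$ with leading coefficient $\deg C/(n-c)!$, no cancellation occurs, so $C'$ has codimension $c$ and $\deg C'=\deg C+(\deg F)\cdot(\deg S)$. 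For the aCM claim, the long exact sequence in local cohomology together with $\depth(R/I_S)=n+2-c$ shows $H^i_{\mathfrak m}(R/I_{C'})\cong H^i_{\mathfrak m}(R/I_C)(-d)$ for all $i<n+1-c$; as $C$ and $C'$ both have codimension $c$, this says precisely that $\depth(R/I_{C'})=n+1-c$ if and only if $\depth(R/I_C)=n+1-c$, i.e.\ $C'$ is aCM if and only if $C$ is. Finally, at the level of sets, $V(I_{C'})=V(F\cdot I_C+I_S)=\big(V(F)\cup V(I_C)\big)\cap V(I_S)=\big(V(F)\cap S\big)\cup\big(C\cap S\big)=H_F\cup C$, using $C\subseteq S$; this is the claimed set-theoretic equality on $S$ (the statement's ``$C'=C'\cup H_F$'' being a misprint for $C'=C\cup H_F$).
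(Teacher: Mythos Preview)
Your argument is correct, but note that the paper does not actually prove this proposition: it is stated as a result ``recalled from \cite{KMMNP}'' (Basic Double $G$-Linkage), so there is no in-paper proof to compare against. What you have written is a clean, self-contained verification of the quoted statement. The construction of the short exact sequence via $\varphi(a,b)=Fa+b$ and the kernel identification using that $F$ is a nonzerodivisor on $R/I_S$ are exactly the standard moves; your saturation argument via the quotient sequence and the vanishing $H^0_{\mathfrak m}(R/I_S)=H^1_{\mathfrak m}(R/I_S)=0$ (the latter from $\depth(R/I_S)=n+2-c\ge 2$) is the right way to invoke the aCM hypothesis on $S$, and your derivation of the Hilbert function, degree, and aCM equivalence from the quotient sequence is routine and accurate. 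One small caveat: the depth bound $n+2-c\ge 2$ tacitly assumes $c\le n$, which is harmless in every application made in the paper (where $c=r\le n$) but is worth flagging as an implicit hypothesis. Your reading of ``$C'=C'\cup H_F$'' as a misprint for ``$C'=C\cup H_F$'' is also correct.
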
 

\begin{rem}
The construction in Proposition~\ref{P:201450307-201} is often referred to as {\em Basic Double $G$-Linkage}. 
\end{rem} 


\begin{thm}[{\cite[Theorem 2.1]{AS:1}}]\label{T:20110219-204}
Let $\X$ be a star-configuration in $\P^n$ of type $(2,s)$ defined by general forms in $R=\k[x_0,x_1,\dots,x_n]$ of degrees $d_1,\dots, d_s$, and let $d=d_1+d_2+\cdots+d_s$. Then the  minimal free resolution of $R/I_\X$ is
$$
\begin{matrix}
0 & \ra & R^{s-1}(-d)&  {\ra} & \bigoplus_{i=1}^s R(-(d-d_i)) &  {\ra} & R & \ra & R/I_\X & \ra & 0.
\end{matrix} 
$$
\end{thm}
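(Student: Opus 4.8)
The plan is to exhibit the resolution explicitly as a Hilbert--Burch complex and to verify acyclicity via the Buchsbaum--Eisenbud criterion. First, applying Corollary~\ref{C:20131022-205} with $r=2$ and setting
$$
\tilde F_j:=\frac{\prod_{\ell=1}^s F_\ell}{F_j}=\prod_{i\ne j}F_i\qquad(j=1,\dots,s),
$$
a form of degree $d-d_j$, one gets $I_\X=(\tilde F_1,\dots,\tilde F_s)$. This already accounts for the middle module $\bigoplus_{j=1}^s R(-(d-d_j))$ and for the map $N:=(\tilde F_1,\dots,\tilde F_s)$ onto $I_\X$. The one elementary identity doing all the work is $F_j\tilde F_j=\prod_{\ell=1}^s F_\ell$, \emph{independent of $j$}; I may assume every $F_i$ has positive degree.

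Next I would build the first syzygy matrix from the obvious relations. For $j=2,\dots,s$ put $\sigma_j:=F_1 e_1-F_j e_j$ in $\bigoplus_k R(-(d-d_k))$; then $\sigma_j$ is homogeneous of degree $d$ and $N\cdot\sigma_j=F_1\tilde F_1-F_j\tilde F_j=0$, so $\sigma_j$ is a syzygy. Let $M\colon R^{s-1}(-d)\to\bigoplus_{j=1}^s R(-(d-d_j))$ be the $s\times(s-1)$ matrix with columns $\sigma_2,\dots,\sigma_s$, i.e.\ $F_1$ repeated along the first row and $-F_2,\dots,-F_s$ down the diagonal of rows $2,\dots,s$. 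A one-line determinant computation, expanding this bordered-diagonal matrix along the column that misses the deleted row, shows that the $s$ maximal minors of $M$ are $\pm\tilde F_1,\dots,\pm\tilde F_s$; in particular $I_{s-1}(M)=I_\X$, and $\operatorname{rank}M=s-1$ (the submatrix on rows $2,\dots,s$ is $\operatorname{diag}(-F_2,\dots,-F_s)$).

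It then remains to feed the numerical hypotheses to the acyclicity criterion. One has $\operatorname{grade}I_\X=\operatorname{ht}I_\X=2$: indeed $\X$ is a codimension-two subscheme because $I_\X=\bigcap_{1\le i<j\le s}(F_i,F_j)$ and each $(F_i,F_j)$ is a height-two complete intersection (two general forms of positive degree share no factor, hence form a regular sequence in the Cohen--Macaulay ring $R$), and $R$ Cohen--Macaulay gives $\operatorname{grade}=\operatorname{ht}$. Now the complex
$$
0\longrightarrow R^{s-1}(-d)\xrightarrow{\ M\ }\bigoplus_{j=1}^s R(-(d-d_j))\xrightarrow{\ N\ } R\longrightarrow R/I_\X\longrightarrow 0
$$
satisfies $N\circ M=0$, $\operatorname{rank}M=s-1$, $\operatorname{rank}N=1$ (so the alternating-rank conditions hold), $\operatorname{grade}I_{s-1}(M)=\operatorname{grade}I_\X=2$, and $\operatorname{grade}I_1(N)=\operatorname{grade}I_\X\ge 1$; by the Buchsbaum--Eisenbud criterion (equivalently the Hilbert--Burch theorem, the two coinciding in length two) it is exact, hence a graded free resolution of $R/I_\X$. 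It is minimal because every entry of $M$ and of $N$ has positive degree, so no generator can be cancelled.

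The one place where something genuinely has to be proved rather than merely checked is $\operatorname{grade}I_\X=2$: it is this input that upgrades ``the $\sigma_j$ are syzygies and their minors recover the $\tilde F_j$'' to ``the displayed complex is exact''. Since $\X$ is visibly codimension two the verification is short, so I do not anticipate a real obstacle; the homological content is carried entirely by Hilbert--Burch. (For general $1\le r\le n$ this matrix is replaced by an Eagon--Northcott-type complex, which is the route used in Theorem~\ref{T:20140307-303}; $r=2$ is precisely where that complex degenerates to Hilbert--Burch.)
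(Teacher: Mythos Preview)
Your argument is correct. Note, however, that the paper does not actually prove Theorem~\ref{T:20110219-204}: it is quoted from \cite{AS:1} and then used as the base case $r=2$ of the induction in Theorem~\ref{T:20140307-303}. That said, your approach is exactly the $r=2$ instance of the Eagon--Northcott argument the paper deploys for general $r$ in Theorem~\ref{T:20140307-303}: there $I_{\X}$ is realised as the ideal of maximal minors of an $(s-r+1)\times s$ matrix $[a_{i,j}F_j]$ with generic scalars $a_{i,j}$, and for $r=2$ the Eagon--Northcott complex \emph{is} the Hilbert--Burch complex. Your explicit syzygy matrix (a row of $F_1$'s over the diagonal $-F_2,\dots,-F_s$) is simply a row-reduced form of the paper's generic-coefficient matrix, so the ideals of minors and the resulting resolutions coincide. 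Where the paper separates the construction of the free modules (via the mapping cone coming from Basic Double $G$-Linkage, Proposition~\ref{P:201450307-201}) from the verification of minimality (via Eagon--Northcott together with $\depth I_\X=r$), you do both at once with Buchsbaum--Eisenbud, which is the natural shortcut available in codimension two; you already flag this degeneration in your last paragraph.
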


\begin{rem} Let $L_1,\dots,L_s$ be general linear forms in $R=\k[x_0,x_1,\dots,x_n]$, and let $\Y$ be a star-configuration in $\P^n$ of type $(2,s)$ defined by forms $L_1^{d_1},\dots,L_s^{d_s}$  of degrees $d_1,\dots,d_s$. By Theorem 3.1 in \cite{AS:2}, $R/I_\Y$ has the same minimal free resolution as $R/I_\X$, where $\X$ is a star-configuration in $\P^n$ of type $(2,s)$ defined by general forms $F_1,\dots,F_s$ in $R$ of degrees $d_1,\dots,d_s$, respectively. Using this result, we obtain the following corollary for a specific case.
\end{rem}

\begin{cor} \label{C:20130107-203} Let $\X$ be a star-configuration in $\P^n$ of type $(2,s)$ defined by general forms $F_1,\dots,F_s$ in $R$ of degree $d$, and let $\Y$ be a star-configuration in $\P^n$ of type $(2,s)$ defined by forms $L_1^d,\dots,L_s^d$, where $L_i$'s are general linear forms in $R$. Then $R/I_\X$ and $R/I_\Y$ have the same minimal free resolution.
\end{cor}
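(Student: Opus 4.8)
The plan is to obtain this as the equal-degree specialisation of the Remark preceding the statement, i.e.\ of \cite[Theorem 3.1]{AS:2}, and then to record the resolution explicitly by means of Theorem~\ref{T:20110219-204}. Recall that \cite[Theorem 3.1]{AS:2} asserts that for \emph{any} choice of degrees $d_1,\dots,d_s$ the ring $R/I_\Y$, where $\Y$ is the star-configuration in $\P^n$ of type $(2,s)$ cut out by $L_1^{d_1},\dots,L_s^{d_s}$ with $L_1,\dots,L_s$ general linear forms, has the same minimal free resolution as $R/I_{\X'}$, where $\X'$ is the star-configuration of type $(2,s)$ defined by general forms of degrees $d_1,\dots,d_s$. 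Applying this with $d_1=\cdots=d_s=d$ immediately gives that $R/I_\X$ and $R/I_\Y$ have the same minimal free resolution.

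To write this resolution down I would substitute $d_1=\cdots=d_s=d$ into Theorem~\ref{T:20110219-204}: the parameter there called ``$d$'' is the sum $d_1+\cdots+d_s$, which here equals $sd$, while $d-d_i=(s-1)d$ for every $i$. Hence the common minimal free resolution of $R/I_\X$ and $R/I_\Y$ is
$$
0 \longrightarrow R^{\,s-1}(-sd) \longrightarrow \bigoplus_{i=1}^{s} R\big(-(s-1)d\big) \longrightarrow R \longrightarrow R/I_\X \longrightarrow 0 .
$$

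The only delicate point — and the reason this is not internal to the present paper — is that the forms $L_1^d,\dots,L_s^d$ defining $\Y$ are \emph{not} general forms of degree $d$, so neither Corollary~\ref{C:20131022-205} nor Theorem~\ref{T:20110219-204} can be applied to $\Y$ directly, and the external input \cite[Theorem 3.1]{AS:2} is genuinely needed. If one wanted a self-contained proof, I expect the main obstacle to be precisely the analogue of Corollary~\ref{C:20131022-205} for powers of linear forms, namely the identity
$$
\bigcap_{1\le i<j\le s}(L_i^d,L_j^d)=\big(\widetilde{L_1},\dots,\widetilde{L_s}\big),\qquad \widetilde{L_i}:=\prod_{\ell\ne i}L_\ell^d ,
$$
whose proof is where the genericity of $L_1,\dots,L_s$ is indispensable. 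Granting it, the rest is formal: since $L_i^d\,\widetilde{L_i}=\prod_{\ell=1}^{s}L_\ell^d$ is independent of $i$, the forms $\widetilde{L_1},\dots,\widetilde{L_s}$ are (up to sign) the maximal minors of the $s\times(s-1)$ bidiagonal matrix realising the relations $L_i^d\,\widetilde{L_i}-L_{i+1}^d\,\widetilde{L_{i+1}}=0$ for $1\le i\le s-1$, and the Hilbert--Burch theorem then yields both that $\Y$ is aCM and the displayed resolution, the twist $-sd$ on the last free module coming from the degree of the matrix entries. Alternatively, one could build $\X$ and $\Y$ by the same chain of Basic Double $G$-Linkages of Proposition~\ref{P:201450307-201}, where the only requirement on the successive forms is that $L_i^d$ (resp.\ $F_i$) be a non-zero-divisor modulo the ideal produced so far, which holds automatically for general linear forms; this again forces the two resolutions to coincide.
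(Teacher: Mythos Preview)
Your proposal is correct and matches the paper's own argument exactly: the corollary is stated immediately after the Remark as a ``specific case'' of \cite[Theorem~3.1]{AS:2}, i.e.\ the equal-degree specialisation $d_1=\cdots=d_s=d$, with no further proof given. Your explicit display of the resulting resolution via Theorem~\ref{T:20110219-204} and your commentary on why the external input is genuinely needed are accurate additions, but the core derivation is identical to the paper's.
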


\begin{thm} \label{T:20140307-303} Let $\X^{(r,s)}$ be a star-configuration in $\P^n$ of type $(r,s)$ defined by general forms $F_1,\dots,F_s$ in $R=\k[x_0,x_1,\dots,x_n]$ of degrees $d_1,d_2,\dots, d_s,$ where  $2\le r \le \min\{s,n\}$, and let $d=d_1+\cdots+d_s$. Then the minimal free resolution of $I_{\X^{(r,s)}}$ is 
\begin{equation} \label{EQ:20140328-301}
\begin{array}{ccccccccccccccc} 
0 \to \FF_{r}^{(r,s)} \to \FF_{r-1}^{(r,s)}  \to \cdots \to \FF_1^{(r,s)}  \to I_{\X^{(r,s)}} \to 0
\end{array} 
\end{equation} 
where
$$
\begin{array}{rcllllllllllll}
\FF_r^{(r,s)}  & = & R^{\alpha_{r}^{(r,s)}} (-d), \\
\FF_{r-1}^{(r,s)}  & = & \ds \bigoplus_{1\le i_1\le s}  R^{\alpha_{r-1}^{(r,s)}}(-(d-d_{i_1})),\\
& \vdots &  \\
\FF_{\ell}^{(r,s)}  & = &\ds  \bigoplus_{1\le i_1<\cdots < i_{r-\ell}\le s} R^{\alpha_{\ell}^{(r,s)}} (-(d-(d_{i_1}+\cdots+d_{i_{r-\ell}}))), \\
& \vdots &  \\
\FF_{2}^{(r,s)}  & = & \ds  \bigoplus_{1\le i_1<\cdots < i_{r-2}\le s} R^{\alpha_{2}^{(r,s)}} (-(d-(d_{i_1}+\cdots+d_{i_{r-2}}))),  \quad \text{and} \\
\FF_{1}^{(r,s)}  & = &  \ds  \bigoplus_{1\le i_1<\cdots < i_{r-1}\le s} R (-(d-(d_{i_1}+\cdots+d_{i_{r-1}}))), 
\end{array} 
$$
 with
$$
{\alpha_{\ell}^{(r,s)}}=\binom{s-r+\ell-1}{\ell-1} \quad \text{and} \quad {\rm rank}  \FF_\ell^{(r,s)}=\binom{s-r+\ell-1}{\ell-1}\cdot \binom{s}{r-\ell}
$$
for $1\le \ell \le r$. In particular, the last free module $\FF_r^{(r,s)}$ has only one shift $d$, i.e., a star-configuration $\X^{(r,s)}$ in $\P^n$ is level. Furthermore, any star-configuration $\X^{(r,s)}$ in $\P^n$ is aCM. 

\end{thm}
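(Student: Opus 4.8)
The plan is to build the resolution by induction on $s$ (the number of forms), using Basic Double $G$-Linkage (Proposition~\ref{P:201450307-201}) as the inductive engine. The key structural observation is the one already used to prove Theorem~\ref{T:20140302-103}: passing from $s-1$ forms to $s$ forms, the ideal $I_{\X^{(r,s)}}$ can be obtained from the ideals of two smaller star-configurations. More precisely, I would set $S$ to be the subscheme defined by $I_{\X^{(r-1,s-1)}}$ (built from $F_1,\dots,F_{s-1}$), which has codimension $r-1$ and is aCM by the inductive hypothesis, and $C$ the subscheme defined by $I_{\X^{(r,s-1)}}$, of codimension $r$; then take $F=F_s$, which is a general form and hence a nonzerodivisor on $R/I_S$. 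Corollary~\ref{C:20131022-205} shows that $F_s\cdot I_{\X^{(r,s-1)}} + I_{\X^{(r-1,s-1)}}$ equals $I_{\X^{(r,s)}}$ up to the correct degree shifts, matching the Basic Double Linkage construction $I'=F\cdot I_C+I_S$. The base cases are $r=2$ (Theorem~\ref{T:20110219-204}, the Eagon--Northcott/Hilbert--Burch resolution) and $s=r$ (where $I_{\X^{(r,r)}}=(F_1\cdots F_r/F_1,\dots) $ is a complete intersection-like ideal whose Koszul-type resolution is immediate).

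First I would verify the combinatorial bookkeeping: the short exact sequence $0\to I_S(-d_s)\to I_C(-d_s)\oplus I_S\to I_{\X^{(r,s)}}\to 0$ gives, via the mapping cone, a (not necessarily minimal) free resolution of $I_{\X^{(r,s)}}$ whose $\ell$-th term is $\FF_\ell^{(r,s-1)}(-d_s)\oplus \FF_\ell^{(r-1,s-1)}$ (with the obvious degree conventions and $d^{(r,s-1)}=d-d_s$, $d^{(r-1,s-1)}=d-d_s$ as well since the product runs over $F_1,\dots,F_{s-1}$). I would then check that the claimed ranks and Betti numbers satisfy the Pascal-type recursion $\alpha_\ell^{(r,s)}=\alpha_\ell^{(r,s-1)}+\alpha_{\ell-1}^{(r-1,s-1)}$ coming from $\binom{s-r+\ell-1}{\ell-1}=\binom{(s-1)-r+\ell-1}{\ell-1}+\binom{(s-1)-(r-1)+(\ell-1)-1}{(\ell-1)-1}$, and that the graded shifts match: the shift $d-(d_{i_1}+\cdots+d_{i_{r-\ell}})$ over subsets of $\{1,\dots,s\}$ splits into subsets avoiding $s$ (contributing to the $\FF_\ell^{(r,s-1)}(-d_s)$ part, after adding back $d_s$) and subsets containing $s$ (contributing to $\FF_{\ell-1}^{(r-1,s-1)}$). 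This is routine but must be done carefully to confirm the shapes in \eqref{EQ:20140328-301}.

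The main obstacle is \emph{minimality}: the mapping cone produces a resolution, but a priori it could fail to be minimal if the connecting map $I_S(-d_s)\to I_C(-d_s)\oplus I_S$ induces, at some homological degree, a map between free modules with a unit entry. I would argue minimality by a degree/shift comparison: the generators of $\FF_\ell^{(r,s-1)}(-d_s)$ live in degrees $d-(d_{i_1}+\cdots+d_{i_{r-\ell}})$ with $i_j\le s-1$, while the generators of $\FF_{\ell-1}^{(r-1,s-1)}$ live in degrees $d-d_s-(d_{i_1}+\cdots+d_{i_{r-\ell+1}})$ with $i_j\le s-1$; since $d_s$ is a general degree and the two families of shifts are strictly separated (or one invokes the standard fact that a mapping cone over Basic Double Linkage of ideals with no common graded generators is minimal — this is exactly why \cite{KMMNP}-type arguments succeed), no cancellation occurs. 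Alternatively, and perhaps cleanest, once aCM is established one can identify the resolution directly with an Eagon--Northcott complex: by Corollary~\ref{C:20131022-205}, $I_{\X^{(r,s)}}$ is generated by the maximal minors-like expressions $\prod F_\ell / (F_{j_1}\cdots F_{j_{r-1}})$, and I would exhibit these as the entries associated to a suitable $(s-r+1)\times s$ (or $r\times(s)$) matrix built from the $F_i$, whose Eagon--Northcott resolution has precisely the Betti numbers $\binom{s-r+\ell-1}{\ell-1}\binom{s}{r-\ell}$; genericity of the $F_i$ guarantees the ideal has the expected codimension, so the Eagon--Northcott complex is exact and minimal.

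For the final assertions: \emph{aCM} follows immediately from the inductive step, since Proposition~\ref{P:201450307-201} states $C'$ is aCM iff $C$ is, and the base cases are aCM; equivalently, the resolution \eqref{EQ:20140328-301} has length $r=\codim \X^{(r,s)}$, which by the Auslander--Buchsbaum formula forces $\depth R/I_{\X^{(r,s)}}=n+1-r=\dim R/I_{\X^{(r,s)}}$. \emph{Level} (the statement that $\FF_r^{(r,s)}$ has the single shift $d$) is read off directly from the displayed form of $\FF_r^{(r,s)}=R^{\alpha_r^{(r,s)}}(-d)$, which the rank computation confirms. I expect the write-up to spend most of its effort on the shift/rank recursion and on the minimality justification; the aCM and level conclusions are then one-line corollaries.
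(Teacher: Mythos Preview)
Your plan coincides with the paper's: double induction via Basic Double $G$-Linkage (Proposition~\ref{P:201450307-201}) with base cases $r=2$ (Theorem~\ref{T:20110219-204}) and $s=r$ (Koszul), mapping cone to produce a resolution, Pascal-type recursion on the $\alpha_\ell^{(r,s)}$, and Eagon--Northcott for minimality; aCM and level then follow as you say. Two points need correction.

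First, the mapping cone of $0\to I_{\X^{(r-1,s-1)}}(-d_s)\to I_{\X^{(r,s-1)}}(-d_s)\oplus I_{\X^{(r-1,s-1)}}\to I_{\X^{(r,s)}}\to 0$ has $\ell$-th free module
\[
\FF_\ell^{(r,s-1)}(-d_s)\;\oplus\;\FF_\ell^{(r-1,s-1)}\;\oplus\;\FF_{\ell-1}^{(r-1,s-1)}(-d_s),
\]
three summands rather than two. Your recursion $\alpha_\ell^{(r,s)}=\alpha_\ell^{(r,s-1)}+\alpha_{\ell-1}^{(r-1,s-1)}$ is correct, but to conclude that $\FF_\ell^{(r,s)}$ has the stated uniform multiplicity across all $(r-\ell)$-subsets of $\{1,\dots,s\}$ you must also check that this common value equals $\alpha_\ell^{(r-1,s-1)}$ (the multiplicity attached to subsets containing $s$, coming from the middle summand). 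The paper does exactly this bookkeeping.

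Second, your degree-separation argument for minimality does not work. The $d_i$ are fixed positive integers with no useful genericity, and the relevant shifts in $\FF_{\ell-1}^{(r-1,s-1)}(-d_s)$ and $\FF_{\ell-1}^{(r,s-1)}(-d_s)$ (the pair across which the lifted inclusion $I_S\hookrightarrow I_C$ could produce unit entries) can and do coincide; already for mixed degrees such as $d_1=2$, $d_2=d_3=1$ one gets equal sums over subsets of different sizes. There is no ``standard fact'' guaranteeing minimality of this mapping cone. The paper proceeds precisely via your alternative route: it exhibits an explicit $(s-r+1)\times s$ matrix $M=[a_{i,j}F_j]$, where $[a_{i,j}]$ is a scalar matrix over $\k$ with every minor nonzero, observes that the maximal minors of $M$ are nonzero scalar multiples of the generators in Corollary~\ref{C:20131022-205}, and then applies the Eagon--Northcott criterion (Corollary~A2.12 of \cite{E:1}) using $\depth I_{\X^{(r,s)}}=r$, which is available from the aCM statement already established inductively via Proposition~\ref{P:201450307-201}. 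So commit to the Eagon--Northcott argument; it is not merely cleaner but necessary.
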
   

\begin{proof} We shall prove this by double induction on $r$ and $s$. If $r=2$, then, by Theorem~\ref{T:20110219-204},  the result holds.
Now assume $r>2$. 
Let $\X^{(r,s-1)}$ be a star-configuration in $\P^n$ of type $(r,s-1)$  and $\X^{(r-1,s-1)}$ a star-configuration in $\P^n$ of type $(r-1,s-1)$ defined by general forms of degrees $d_1,\dots,d_{s-1}$, respectively.

For the initial case of $s$, i.e., $r=s$, the minimal free resolution of $I_{\X^{(r,s)}}$ is obtained from the Koszul-complex generated by a regular sequence of general forms of degrees $d_1,\dots,d_s$, and so
$$
{\alpha_{\ell}^{(r,r)}}=1=\binom{\ell-1}{\ell-1} \quad \text{and} \quad  {\rm rank} \, \FF_\ell^{(r,r)}=\binom{r}{r-\ell}=\binom{\ell-1}{\ell-1}\cdot \binom{r}{r-\ell},
$$
as we wished.

Now suppose $r<s$. By Proposition~\ref{P:201450307-201}, we obtain  the exact sequence
\begin{equation} \label{EQ:20140320-301} 
0 \to I_{\X^{(r-1,s-1)}}(-d_s) \to  I_{\X^{(r,s-1)}}(-d_s) \oplus I_{\X^{(r-1,s-1)}} \to I \to 0,
\end{equation} 
where 
$$
I=F_s\cdot I_{\X^{(r,s-1)}}+I_{\X^{(r-1,s-1)}}.
$$

Notice that, by Corollary~\ref{C:20131022-205}, $I$ is the ideal of a star-configuration $\X^{(r,s)}$ in $\P^n$ of type $(r,s)$ defined by general forms $F_1,\dots,F_s$ in $R$ of degrees $d_1,\dots,d_s$, i.e., $I=I_{\X^{(r,s)}}$. Let $d'=d_1+\cdots+d_{s-1}$. By double induction on $r$ and $s$, we assume that 
$$
\begin{array}{cccccccllllllllll}
0 \to \FF_{r-1}^{(r-1,s-1)}  \to \cdots \to \FF_1^{(r-1,s-1)}  \to I_{\X^{(r-1,s-1)}} \to 0, \quad \text{and} \\
0 \to \FF_{r}^{(r,s-1)}  \to \cdots \to \FF_1^{(r,s-1)}  \to I_{\X^{(r,s-1)}} \to 0
\end{array} 
$$
are free resolutions of $\X^{(r-1,s-1)}$ and $\X^{(r,s-1)}$, respectively, such that
$$
\begin{array}{ccllllllllllll}
\FF_{r-1}^{(r-1,s-1)}  & = & \ds R^{\alpha_{r-1}^{(r-1,s-1)}} (-d'), \\
\FF_{r-2}^{(r-1,s-1)}  & = & \ds \bigoplus_{1\le i_1\le s-1}  R^{\alpha_{r-2}^{(r-1,s-1)}}(-(d'-d_{i_1})),\\
& \vdots &  \\
\FF_{\ell}^{(r-1,s-1)}  & = &\ds  \bigoplus_{1\le i_1<\cdots < i_{(r-1)-\ell}\le s-1} R^{\alpha_\ell^{(r-1,s-1)}} (-(d'-(d_{i_1}+\cdots+d_{i_{(r-1)-\ell}}))), \\
& \vdots &  \\
\FF_{2}^{(r-1,s-1)}  & = & \ds  \bigoplus_{1\le i_1<\cdots < i_{r-3}\le s-1} R^{\alpha_2^{(r-1,s-1)}} (-(d'-(d_{i_1}+\cdots+d_{i_{r-3}}))),  \\
\FF_{1}^{(r-1,s-1)}  & = &  \ds  \bigoplus_{1\le i_1<\cdots < i_{r-2}\le s-1} R (-(d'-(d_{i_1}+\cdots+d_{i_{r-2}}))), 
\end{array} 
$$
and
$$
\begin{array}{rcllllllllllll}
\FF_r^{(r,s-1)}  & = & \ds R^{\alpha_{r}^{(r,s-1)}}(-d'), \\
\FF_{r-1} ^{(r,s-1)} & = & \ds \bigoplus_{1\le i_1\le s-1}  R^{{\alpha_{r-1}^{(r,s-1)}}}(-(d'-d_{i_1})),\\
& \vdots &  \\
\FF_{\ell}^{(r,s-1)}  & = &\ds  \bigoplus_{1\le i_1<\cdots < i_{r-\ell}\le s-1} R^{\alpha_{\ell}^{(r,s-1)}} (-(d'-(d_{i_1}+\cdots+d_{i_{r-\ell}}))), \\
& \vdots &  \\
\FF_{2}^{(r,s-1)}  & = & \ds  \bigoplus_{1\le i_1<\cdots < i_{r-2}\le s-1} R^{\alpha_{2}^{(r,s-1)}} (-(d'-(d_{i_1}+\cdots+d_{i_{r-2}}))),  \\
\FF_{1}^{(r,s-1)}  & = &  \ds  \bigoplus_{1\le i_1<\cdots < i_{r-1}\le s-1} R (-(d'-(d_{i_1}+\cdots+d_{i_{r-1}}))).
\end{array} 
$$
By the mapping cone construction with equation~\eqref{EQ:20140320-301}, we obtain the following diagram:
$$
\begin{array}{ccccccccccccccc}
&&   && 0 &&   \\
&&   && \downarrow &&   \\ 
&& 0 && \FF_r^{(r,s-1)} (-d_s) &&    \\
&& \downarrow && \downarrow &&  \\ 
&& \FF_{r-1}^{(r-1,s-1)} (-d_s) && \FF_{r-1}^{(r,s-1)} (-d_s)\oplus \FF_{r-1}^{(r-1,s-1)}  &&   \\
&& \downarrow && \downarrow &&  \\ 
&& \vdots && \vdots &&   \\ 
&& \FF_{1}^{(r-1,s-1)} (-d_s) && \FF_{1}^{(r,s-1)} (-d_s)\oplus \FF_{1}^{(r-1,s-1)}  &&   \\
&& \downarrow && \downarrow &&  \\ 
0 & \to & I_{\X^{(r-1,s-1)}}(-d_s) & \to  & I_{\X^{(r,s-1)}}(-d_s) \oplus I_{\X^{(r-1,s-1)}} & \to & I & \to & 0. \\
&& \downarrow && \downarrow && \\ 
&&0 && 0 &&  \\ 
\end{array} 
$$
Hence we obtain a free resolution of $I=I_{\X^{(r,s)}}$ as
\begin{equation} \label{EQ:20140320-302} 
\begin{array}{lllllllllllllllllllllll}
\begin{matrix} 
0 \to 
\left[\begin{matrix}  \FF_r^{(r,s-1)} (-d_s) \\ \oplus \\ \FF_{r-1}^{(r-1,s-1)} (-d_s) \end{matrix} \right] \to  
\left[\begin{matrix}  \FF_{r-1}^{(r,s-1)} (-d_s)\oplus \FF_{r-1}^{(r-1,s-1)}  \\ \oplus \\ \FF_{r-2}^{(r-1,s-1)} (-d_s) \end{matrix}\right]   \to  
\cdots \to 
\left[\begin{matrix}  \FF_{2}^{(r,s-1)} (-d_s)\oplus \FF_{2}^{(r-1,s-1)}  \\ \oplus \\ \FF_{1}^{(r-1,s-1)} (-d_s) \end{matrix}  \right]
\end{matrix}   
\\[4ex] 
\begin{matrix} 
\phantom{0} \to  \FF_{1} ^{(r,s-1)} (-d_s)\oplus \FF_{1}^{(r-1,s-1)}    \to 
I  \to  0. 
\end{matrix} 
\end{array} 
\end{equation}

Now consider a free module 
\begin{equation} \label{EQ:20140319-303} 
\begin{array}{ccccccccccc}
\FF_\ell^{(r,s)}
& = & \FF_\ell^{(r,s-1)}(-d_s) \oplus \FF_{\ell}^{(r-1,s-1)} \oplus \FF_{\ell-1}^{(r-1,s-1)}(-d_s), \\[1ex] 
& = & \left[
\begin{matrix} 
\ds  \bigoplus_{1\le i_1<\cdots < i_{r-\ell}\le s-1} R^{\alpha_\ell^{(r,s-1)}} (-(d'+d_s-(d_{i_1}+\cdots+d_{i_{r-\ell}})))\\
\oplus \\
\ds  \bigoplus_{1\le i_1<\cdots < i_{(r-1)-\ell}\le s-1} R^{\alpha_\ell^{(r-1,s-1)}} (-(d'-(d_{i_1}+\cdots+d_{i_{(r-1)-\ell}}))) \\
\oplus \\
\ds  \bigoplus_{1\le i_1<\cdots < i_{(r-1)-(\ell-1)}\le s-1} R^{\alpha_{\ell-1}^{(r-1,s-1)}} (-(d'+d_s-(d_{i_1}+\cdots+d_{i_{(r-1)-(\ell-1)}}))).
\end{matrix} 
\right] 
\end{array}
\end{equation} 
for $1\le \ell\le s$.
 Since $d=d'+d_s$, we can rewrite equation~\eqref{EQ:20140319-303} as 
\begin{equation} \label{EQ:20140320-303} 
\begin{array}{lllllllllllllll}
\FF_\ell^{(r,s)}
& = & \left[
\begin{matrix} 
\ds  \bigoplus_{1\le i_1<\cdots < i_{r-\ell}\le s-1} R^{\alpha_\ell^{(r,s-1)}} (-(d-(d_{i_1}+\cdots+d_{i_{r-\ell}})))\\
\oplus \\
\ds  \bigoplus_{1\le i_1<\cdots < i_{(r-1)-\ell}\le s-1} R^{\alpha_\ell^{(r-1,s-1)}} (-(d-(d_{i_1}+\cdots+d_{i_{(r-1)-\ell}}+d_s))) \\
\oplus \\
\ds  \bigoplus_{1\le i_1<\cdots < i_{r-\ell}\le s-1} R^{\alpha_{\ell-1}^{(r-1,s-1)}} (-(d-(d_{i_1}+\cdots+d_{i_{r-\ell}})))
\end{matrix} \right] \\[10.2ex] 
& = & \left[
\begin{matrix} 
\ds  \bigoplus_{1\le i_1<\cdots < i_{r-\ell}\le s-1} R^{\alpha_{\ell-1}^{(r-1,s-1)}+\alpha_\ell^{(r,s-1)}} (-(d-(d_{i_1}+\cdots+d_{i_{r-\ell}})))\\
\oplus \\
\ds  \bigoplus_{1\le i_1<\cdots < i_{(r-1)-\ell}\le s-1} R^{\alpha_\ell^{(r-1,s-1)}} (-(d-(d_{i_1}+\cdots+d_{i_{(r-1)-\ell}}+d_s))) 
\end{matrix} \right] .
\end{array}
\end{equation} 

Now we shall prove that $\alpha_\ell^{(r,s)}:=\alpha_{\ell-1}^{(r-1,s-1)}+\alpha_\ell^{(r,s-1)}={\alpha_\ell^{(r-1,s-1)}}$, i.e., $\FF_\ell^{(r,s)}$ is of the form
$$
\FF_\ell ^{(r,s)}
  =  \ds  \bigoplus_{1\le i_1<\cdots < i_{r-\ell}\le s} R^{\alpha_\ell^{(r,s)}} (-(d-(d_{i_1}+\cdots+d_{i_{r-\ell}}))) .
$$
If $\ell=1$, then by Corollary~\ref{C:20131022-205}
$$
\alpha_{1}^{(r,s)}=1=\binom{s-r+1-1}{1-1}, \quad \text{i.e.,} \quad {\rm rank}\FF_1=\binom{s}{r-1}=\binom{s}{r-1}\cdot \binom{s-r+1-1}{1-1}.
$$
Now suppose $1<\ell <r$. Then by double induction on $r$ and $s$, we have that 
$$
\begin{array}{lllllllllllllllllllllllll}
\alpha_{\ell-1}^{(r-1,s-1)}+\alpha_\ell^{(r,s-1)}
& = & \ds \binom{(s-1)-(r-1)+(\ell-1)-1}{(\ell-1)-1}+\binom{(s-1)-r+\ell-1}{\ell-1} \\[1.8ex] 
& = & \ds \binom{s-r+\ell-2}{\ell-2}+\binom{s-r+\ell-2}{\ell-1} \\[1.8ex]
& = & \ds \binom{s-r+\ell-1}{\ell-1}\\[1.8ex]
& = & \ds \binom{(s-1)-(r-1)+\ell-1}{\ell-1}\\[1.8ex]
& = & \alpha_{\ell}^{(r-1,s-1)},
\end{array} 
$$
and hence
$$
{\rm rank}\FF_\ell=\ds \binom{s-r+\ell-1}{\ell-1}\cdot \ds \binom{s}{r-\ell}.
$$
Moreover, if $\ell=r$, then it is from equation~\eqref{EQ:20140320-302} and double induction on $r$ and $s$ that
$$
\begin{array}{lllllllllllllllllllllll}
\alpha_{r}^{(r,s)}
& = & \alpha^{(r,s-1)}_r+\alpha_{r-1}^{(r-1,s-1)} \\[.5ex] 
& = & \ds \binom{s-2}{r-1}+\binom{s-2}{r-2} \\[1.8ex] 
& = & \ds \binom{s-1}{r-1}\\[1.8ex] 
& = & {\rm rank} \, \FF_r^{(r,s)},
\end{array} 
$$
as we wished. 

Now we shall prove that the free resolution in equation~\eqref{EQ:20140320-302} is  minimal. Since $\k$ is an infinite field, for any $1\le r \le \min\{s,n\}$, we can take an $(s-r+1)\times s$ matrix $A=[a_{i,j}]$ such that any $\gamma\times \gamma$ minor is not $0$ for some $a_{i,j}\in \k$ and for every $1\le \gamma\le s-r+1$. Define a  $(s-r+1)\times s$ matrix $M=[a_{i,j}F_j]$. Then, a $\gamma\times \gamma$-minor of $M$ is of the form
 $$
 \begin{array}{lllllllllllll}
 & \det
  \left[
  \begin{array}{rrrrrrrrrrrr}
 a_{i_11,j_1}F_{j_1} & \cdots &  a_{i_1,j_\gamma F_{j_\gamma}} \\
 & \dots & \\
  a_{i_\gamma,j_1}F_{j_1} & \cdots &  a_{i_\gamma,j_\gamma}F_{\gamma}
 \end{array} 
 \right] \\[3ex]  
 = & 
   \det\left[
  \begin{array}{rrrrrrrrrrrr}
 a_{i_11,j_1}  & \cdots &  a_{i_1,j_\gamma}   \\
 & \dots & \\
  a_{i_\gamma,j_1}  & \cdots &  a_{i_\gamma,j_\gamma} 
 \end{array}  \right]  
\cdot F_{j_1} \cdots F_{j_\gamma}. 
 \end{array} 
 $$
Since any $\gamma\times \gamma$ minor of the matrix  $A$ is not $0$, by Corollary~\ref{C:20131022-205} we get that the ideal generated by all maximal minors of the matrix $M$ is $I_\X$. Since $\depth \, I_{\X} = r$, by Corollary A2.12 in \cite{E:1} the graded Betti numbers of the homogeneous coordinate ring of $\X:=\X^{(r,s)}$ are those given by the Eagon-Northcott resolution of the $\gamma\times \gamma$ minors of $M$. In other words, the free resolution of $I_\X$ in equation~\eqref{EQ:20140320-302} is minimal. Moreover, since the last free module $\FF_r^{(r,s)}$ has only one shift $d$,  any star-configuration ${\X^{(r,s)}}$ in $\P^n$ is level.

For the last assertion, recall that, by Theorem~\ref{T:20110219-204}, any star-configuration in $\P^n$ of type $(2,s)$ (i.e., codimension $2$) is aCM. Suppose $r>2$. If $r=s$, then $\X^{(r,s)}$ is a complete intersection, i.e., $\X^{(r,s)}$ is aCM.
If $r<s$, then by double induction on $r$ and $s$, we assume that ${\X^{(r-1,s-1)}}$ and ${\X^{(r,s-1)}}$ are aCM, and so, by Proposition~\ref{P:201450307-201}, ${\X^{(r,s)}}$ is also aCM, which completes this theorem.
\end{proof} 

As a corollary, we obtain the result in \cite{GHM} with $d_1=\cdots=d_s=1$ in Theorem~\ref{T:20140307-303} and we omit the proof.

\begin{cor}[{\cite[Remark 2.11]{GHM}}] \label{C:20140322-305} Let $\X:=\X^{(r,s)}$ be a linear star-configuration in $\P^n$ of type $(r,s)$ with $1\le r\le \min\{s,n\}$. Then the minimal free resolution of $I_\X$ is given by
$$
\begin{array}{ccccccccccccccc} 
0 \to \FF_{r}^{(r,s)} \to \FF_{r-1}^{(r,s)}  \to \cdots \to \FF_1^{(r,s)}  \to I_{\X^{(r,s)}} \to 0
\end{array} 
$$
in Theorem~\ref{T:20140307-303} with $d_1=\cdots=d_s=1$. 
\end{cor}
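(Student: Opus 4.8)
The plan is to obtain this directly from Theorem~\ref{T:20140307-303} by specialisation, since a linear star-configuration of type $(r,s)$ is, by definition, a star-configuration defined by $s$ general \emph{linear} forms, that is, by general forms whose degrees all equal $1$. Setting $d_1=\cdots=d_s=1$ (so that $d=s$) in the resolution~\eqref{EQ:20140328-301} yields exactly the claimed complex: the twist $-(d-(d_{i_1}+\cdots+d_{i_{r-\ell}}))$ appearing in $\FF_\ell^{(r,s)}$ collapses to $-(s-(r-\ell))$, while the Betti numbers $\alpha_\ell^{(r,s)}=\binom{s-r+\ell-1}{\ell-1}$ and the ranks $\binom{s-r+\ell-1}{\ell-1}\binom{s}{r-\ell}$ are left untouched, as they never depended on the individual degrees $d_i$.

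First I would dispose of the case $r=1$, which is not covered by Theorem~\ref{T:20140307-303} (where $r\ge 2$). By Corollary~\ref{C:20131022-205} the defining sum runs over the empty index set, so $I_\X=\big(\prod_{\ell=1}^{s}F_\ell\big)$ is principal; for general linear forms $F_\ell$ this single generator has degree $s$, and the minimal free resolution is simply $0\to R(-s)\to I_\X\to 0$. This matches the asserted complex, whose only free module is $\FF_1^{(1,s)}=R(-s)$ with $\alpha_1^{(1,s)}=\binom{s-1}{0}=1$.

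For $2\le r\le\min\{s,n\}$ the whole argument of Theorem~\ref{T:20140307-303} applies verbatim, and the only point deserving a moment's attention---the main, though minor, obstacle---is to confirm that $s$ general linear forms meet every genericity hypothesis invoked along the way. Each such hypothesis reduces to the assertion that a suitable subcollection of at most $n$ of the relevant forms constitutes a regular sequence of the expected length; for general linear forms this is automatic, since any $\le n$ of them are $\k$-linearly independent and therefore define a linear subspace of the expected codimension. Consequently Lemma~\ref{L:20131014-202}, Theorem~\ref{T:20140302-103} and Corollary~\ref{C:20131022-205} remain valid with $d_i=1$, the codimension equality $\depth I_\X=r$ persists, and the Eagon--Northcott and mapping-cone steps establishing minimality carry over unchanged. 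Hence the specialised complex is the minimal free resolution of $I_\X$, which is the stated conclusion.
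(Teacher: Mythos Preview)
Your proposal is correct and follows exactly the approach the paper intends: the paper states that the corollary is obtained by setting $d_1=\cdots=d_s=1$ in Theorem~\ref{T:20140307-303} and omits the proof entirely. Your write-up is in fact more careful than the paper, since you explicitly handle the boundary case $r=1$ (excluded from Theorem~\ref{T:20140307-303}, which assumes $r\ge 2$) and verify that general linear forms satisfy the requisite genericity hypotheses.
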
 

\section{The Weak-Lefschetz Property}
 
A graded Artinian $\k$-algebra $A=\bigoplus^s_{i=0} A_i\ (A_s \ne 0)$ has the {\em weak-Lefschetz property} if the homomorphism $(\times L):A_i \ra A_{i+1}$, induced by multiplication by a linear form $L$, has maximal rank for each $i$. In this case, we call $L$ a {\em Lefschetz element}.  

We first recall a question in \cite{AS:1}.

\begin{ques}[{\cite[Question 1.3]{AS:1}}]\label{Q:20110219-102} Let $\X:=\X^{(2,s)}$ and $\Y:=\X^{(2,t)}$ be star-configurations in $\P^2$ of type $(2,s)$ and $(2,t)$ defined by general forms of degree $d\ge1$ with $s\ge 3$. Does an Artinian ring $R/(I_{\X}+I_{\Y})$ have the weak-Lefschetz property? 
\end{ques}

We revise the above question to a more general question as follows.

\begin{ques} \label{Q:20131219-102} Let $\X:=\X^{(n,s)}$ and $\Y:=\X^{(n,t)}$ be star-configurations in $\P^n$ of type $(n,s)$ and $(n,t)$ defined by general forms of degree $d\ge1$. Does the Artinian ring $R/(I_{\X}+I_{\Y})$ have the weak-Lefschetz property? 
\end{ques}

We start with a proposition on the weak-Lefschetz property from \cite{GHMS} and provide an answer to Question~\ref{Q:20131219-102} for $d=1$. 
Let $\X$ be a finite set of points in $\P^n$ and define
$$
\sigma(\X)=\min\{\, i \mid \H_\X(i-1)=\H_\X(i)  \}.
$$

\begin{pro}[{\cite[Proposition 5.15]{GHMS}}] \label{P:202-20100706} Let $\X$ be a finite set of points in $\P^n$ and let $A$ be an Artinian quotient of the coordinate ring of $\X$. Assume that $\H_A(i)=\H_\X(i)$ for all $0 \le i \le \sigma(\X)-1$. Then $A$ has the weak-Lefschetz property. 
\end{pro}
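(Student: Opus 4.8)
The plan is to exhibit a single general linear form $L$ that serves as a Lefschetz element for \emph{every} such Artinian quotient $A$ at once, splitting the maximal-rank requirement into injectivity in low degrees and surjectivity in high degrees. Write $e=\deg(\X)$ and recall that, because $\X$ is a finite set of points, $R/I_\X$ is a one-dimensional ring whose Hilbert function $\H_\X$ is strictly increasing on $\{0,1,\dots,\sigma(\X)-1\}$ and satisfies $\H_\X(i)=e$ for all $i\ge \sigma(\X)-1$. Since $I_\X$ is saturated and unmixed (its associated primes are the ideals of the points, each of height $n$), a general linear form $L$ is a nonzerodivisor on $R/I_\X$; fix such an $L$. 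Write $A=R/\mathfrak a$ with $\mathfrak a\supseteq I_\X$ and let $\pi\colon R/I_\X\twoheadrightarrow A$ be the induced graded surjection. The hypothesis $\H_A(i)=\H_\X(i)$ for $0\le i\le\sigma(\X)-1$ together with the surjectivity of $\pi$ forces each $\pi_i$ to be an isomorphism of $\k$-vector spaces for $i\le\sigma(\X)-1$.

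First I would treat the degrees $i\le\sigma(\X)-2$. Here $\times L$ commutes with $\pi$, giving a commutative square whose two vertical maps $\pi_i$ and $\pi_{i+1}$ are isomorphisms (as $i,i+1\le\sigma(\X)-1$) and whose top map $\times L\colon (R/I_\X)_i\to(R/I_\X)_{i+1}$ is injective because $L$ is a nonzerodivisor on $R/I_\X$. Hence the bottom map $\times L\colon A_i\to A_{i+1}$ is injective. Since $\H_A=\H_\X$ is strictly increasing on this range, injectivity is exactly maximal rank.

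Next I would treat the degrees $i\ge\sigma(\X)-1$, where I must instead prove surjectivity. The cokernel of $\times L\colon A_i\to A_{i+1}$ is $(A/LA)_{i+1}$, so it suffices to show $(A/LA)_j=0$ for all $j\ge\sigma(\X)$. Because $\mathfrak a+(L)\supseteq I_\X+(L)$, there is a graded surjection $R/(I_\X+(L))\twoheadrightarrow A/LA$, whence $\H_{A/LA}(j)\le \H_{R/(I_\X+(L))}(j)$. As $L$ is a nonzerodivisor on $R/I_\X$, the short exact sequence $0\to (R/I_\X)(-1)\xrightarrow{\times L} R/I_\X\to R/(I_\X+(L))\to 0$ yields $\H_{R/(I_\X+(L))}(j)=\H_\X(j)-\H_\X(j-1)$, which vanishes for $j\ge\sigma(\X)$ by the definition of $\sigma(\X)$. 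Therefore $(A/LA)_j=0$ for $j\ge\sigma(\X)$, so $\times L\colon A_i\to A_{i+1}$ is surjective for every $i\ge\sigma(\X)-1$, and surjectivity is automatically maximal rank. Since the ranges $i\le\sigma(\X)-2$ and $i\ge\sigma(\X)-1$ cover all degrees, $L$ is a Lefschetz element and $A$ has the weak-Lefschetz property.

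The conceptual heart, and the step I expect to require the most care, is the surjectivity argument: recognizing that the obstruction to $\times L$ being onto in degree $i+1$ is exactly the graded piece $(A/LA)_{i+1}$, which is dominated by the Artinian reduction $R/(I_\X+(L))$ whose Hilbert function is the first difference of $\H_\X$ and hence vanishes precisely past $\sigma(\X)$. The two ranges then dovetail exactly at $i=\sigma(\X)-1$ because the point Hilbert function stops growing there; the strict monotonicity below that degree is what converts injectivity into maximal rank, and the unmixedness of a finite point scheme is what guarantees a nonzerodivisor linear form exists in the first place.
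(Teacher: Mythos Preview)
Your proof is correct. Note, however, that the paper does not actually prove this proposition: it is quoted verbatim from \cite[Proposition~5.15]{GHMS} and used as a black box, so there is no ``paper's own proof'' to compare against. What you have written is essentially the standard argument behind that result: use a general linear form $L$ that is a nonzerodivisor on the one-dimensional Cohen--Macaulay ring $R/I_\X$, get injectivity of $\times L$ on $A$ in degrees $\le\sigma(\X)-2$ from the hypothesis $\H_A=\H_\X$ there, and get surjectivity in degrees $\ge\sigma(\X)-1$ by bounding $A/LA$ by the Artinian reduction $R/(I_\X+(L))$, whose Hilbert function is the first difference $\Delta\H_\X$ and hence vanishes past $\sigma(\X)$. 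One small stylistic remark: you do not need to invoke strict monotonicity to conclude that injectivity equals maximal rank, since an injective linear map always has rank equal to the dimension of its source, which is automatically $\le$ the dimension of the target; the strict increase of $\H_\X$ is only needed to know that the Hilbert function of $R/I_\X$ eventually stabilizes, which you use in the surjectivity half.
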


By the same argument as in the proof of Theorem 4.2 in \cite{AS:1} , we obtain the following theorem immediately  and thus  omit the proof.  

\begin{thm}\label{T:204-20100706} Let $\X:=\X^{(n,s)}$ and $\Y:=\X^{(n,t)}$ be linear star configurations in $\P^n$ of type $(n,t)$ and $(n,s)$ with  $s\ge t\ge n$, respectively. Then an Artinian ring $R/(I_{\X}+I_{\Y})$ has the weak-Lefschetz property. 
\end{thm}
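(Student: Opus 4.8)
The plan is to derive this from Proposition~\ref{P:202-20100706}, following the proof of \cite[Theorem~4.2]{AS:1}. Among the two configurations, write $\X=\X^{(n,s)}$ for the one of type $(n,s)$ and $\Y=\X^{(n,t)}$ for the one of type $(n,t)$, with $s\ge t\ge n$, and set $A=R/(I_\X+I_\Y)$. Since $\deg(\Y)=\binom{t}{n}\le\binom{s}{n}=\deg(\X)$, the point set $\Y$ has the smaller value of $\sigma$, so in Proposition~\ref{P:202-20100706} I would take $\Y$ for the finite point set and $A$ for the Artinian quotient of its homogeneous coordinate ring $R/I_\Y$ (note $A$ is a quotient of $R/I_\Y$ because $I_\Y\subseteq I_\X+I_\Y$). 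Using $\X$ instead would not work directly, since $I_\Y$ need not vanish in all degrees below $\sigma(\X)$.

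First I would check that $A$ is Artinian. Since $\X$ and $\Y$ are linear star-configurations, one may take the linear forms defining $\X$ together with those defining $\Y$ to be general; then for each of the finitely many points $p\in\X$ the condition that $p$ lie on a given hyperplane defining $\Y$ is a proper closed condition on that hyperplane, so generically no point of $\X$ lies on any hyperplane of $\Y$, and hence $\X\cap\Y=\varnothing$. Therefore $V(I_\X+I_\Y)=V(I_\X)\cap V(I_\Y)=\varnothing$ in $\P^n$, so $R/(I_\X+I_\Y)$ has Krull dimension $0$, i.e., $A$ is an Artinian quotient of $R/I_\Y$.

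Next comes the Hilbert-function check. By Corollary~\ref{C:20121023-301}, $\Y=\X^{(n,t)}$ has generic Hilbert function, so $\H_\Y(i)=\min\{\binom{t}{n},\binom{i+n}{n}\}$ and $\sigma(\Y)=t-n+1$. By Corollary~\ref{C:20131022-205}, $I_\X=I_{\X^{(n,s)}}$ is generated in the single degree $s-n+1$ and $I_\Y=I_{\X^{(n,t)}}$ in the single degree $t-n+1$; since $s\ge t$, both of these degrees exceed $t-n=\sigma(\Y)-1$. Hence $(I_\X)_i=(I_\Y)_i=0$ for every $0\le i\le\sigma(\Y)-1$, so $(I_\X+I_\Y)_i=0=(I_\Y)_i$ there, and therefore
$$
\H_A(i)=\dim_\k R_i-\dim_\k(I_\X+I_\Y)_i=\dim_\k R_i-\dim_\k (I_\Y)_i=\H_\Y(i)
$$
for all $0\le i\le\sigma(\Y)-1$. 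Now Proposition~\ref{P:202-20100706} applies directly and gives that $A=R/(I_\X+I_\Y)$ has the weak-Lefschetz property.

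The only delicate point is the first step, namely that $R/(I_\X+I_\Y)$ really is Artinian, which amounts to the two star-configurations being disjoint; this is exactly where the genericity hypothesis in the definition of a linear star-configuration is needed. Everything else is routine bookkeeping with the Hilbert-function and minimal-generator statements of Section~2 together with Proposition~\ref{P:202-20100706}, which is precisely the structure of the proof of \cite[Theorem~4.2]{AS:1}.
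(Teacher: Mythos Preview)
Your proof is correct and follows exactly the approach the paper indicates: the paper omits the proof of this theorem, stating only that it follows by the same argument as \cite[Theorem~4.2]{AS:1}, and your write-up is precisely that argument, using Corollary~\ref{C:20121023-301} and Corollary~\ref{C:20131022-205} to verify the hypotheses of Proposition~\ref{P:202-20100706} for the smaller configuration~$\Y$.
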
 

%
%

We also recall the following remark in \cite{AS:1}.

\begin{rem}[{\cite[Remark 4.3]{AS:1}}] If $\X^{(n,s)}$ and $\X^{(n,t)}$ are not linear star-configurations  in Theorem~\ref{T:204-20100706}, then Theorem~\ref{T:204-20100706} may not hold in general. 
For example, assume that $\X:=\X^{(2,4)}$ and $\Y:=\X^{(2,4)}$ are star-configurations in $\P^2$ of type $(2,4)$ defined by general forms in $R=\k[x_0,x_1,x_2]$ of degree $2$. Then, by  Theorem~\ref{T:20140307-303}, the Hilbert functions of $R/I_\X$ and $R/I_\Y$  are
$$
\begin{matrix}
1 & 3 & 6 & 10 & 15 & 21 & 24 & \ra,
\end{matrix}
$$
and thus
$$
\sigma(\X)=\sigma(\Y)=7. 
$$
Furthermore, the Hilbert function of $R/I_{\X\cup\Y}$, obtained by CoCoA, is
$$
\begin{matrix}
1 & 3 & 6 & 10 & 15 & 21 & 28 & 36 & 45 & 48 & \ra,
\end{matrix}
$$
and thus
$$
\begin{array}{llllllllllllllllllll}
\H(R/I_\X+I_\Y,6)
& = & \H(R/I_\X,6)+\H(R/I_\Y,6)-\H(R/I_{\X\cup\Y},6) \\[.5ex]
& = & 24+24-28 \\
& = & 20 \\
& \ne & \H(R/I_\X,6).
\end{array}
$$
This does not satisfy the conditions in Proposition~\ref{P:202-20100706}, and thus we do not know if Theorem~\ref{T:204-20100706} still holds for this case when $\X$ and $\Y$ are  star-configurations in $\P^n$ defined by general forms of degree $d$ with $d\ge 2$. 
\end{rem} 

Theorem~\ref{T:204-20100706}  gives   a  complete answer to Question~\ref{Q:20131219-102} for  $d=1$. In other words, Question~\ref{Q:20110219-102} for $d>1$ is still open.  Thus, we restate Question~\ref{Q:20110219-102} as follows.

\begin{ques}[Restated Question~\ref{Q:20131219-102}] Let $\X:=\X^{(n,s)}$ and $\Y:=\X^{(n,t)}$ be star-configurations in $\P^n$ of type $(n,s)$ and $(n,t)$, respectively, defined by general forms of degree $d>1$. Does an Artinian ring $R/(I_{\X}+I_{\Y})$ have the weak-Lefschetz property? 
\end{ques} 

Furthermore, we have the following question in general.

\begin{ques} Let $\X:=\X^{(n,s)}$ and $\Y:=\X^{(n,t)}$ be star-configurations in $\P^n$ of type $(n,s)$ and $(n,t)$ defined by general forms of degrees $d_1,\dots,d_s$ and $d_1',\dots,d_t'$, respectively. Does an Artinian ring $R/(I_{\X}+I_{\Y})$ have the weak-Lefschetz property? 
\end{ques}

Now we move on to a more  general case of the union of two star-configurations in $\P^2$ of type $(2,s)$ with $s\ge 2$. 
In \cite{S:2}, the author  found that if $\X$ and $\Y$ are star-configurations in $\P^2$ defined by general forms of degrees $\le 2$ and $\sigma(\X)\ne \sigma(\Y)$, then $R/(I_\X+I_\Y)$ has the weak-Lefschetz property.

%

\begin{thm}[{\cite[Theorem 3.3]{S:2}}]\label{T:20110720-303} Let $\X:=\X^{(2,s)}$ and $\Y:=\X^{(2,t)}$ be star-configurations in $\P^2$ of type $(2,s)$ and $(2,t)$ defined by general forms $F_1,\dots,F_s$ and $G_1,\dots,G_t$ in $R=\k[x_0,x_1,x_2]$, respectively, with $s,t\ge 3$. 
Assume $\deg(F_i)\le 2$ for $i=1,\dots,s$ and $\deg(G_j)\le 2$ for $j=1,\dots,t$. If $\sigma(\X)\ne \sigma(\Y)$, then an Artinian ring $R/(I_{\X}+I_{\Y})$ has the weak-Lefschetz property. 
\end{thm}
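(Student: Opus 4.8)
\noindent\emph{Proof sketch.} The plan is to deduce the statement from Proposition~\ref{P:202-20100706}, so that everything comes down to a Hilbert-function comparison in low degrees. First I would observe that, since $F_1,\dots,F_s,G_1,\dots,G_t$ are general forms, the finite point sets $\X$ and $\Y$ are disjoint in $\P^2$, so $\X\cap\Y=\varnothing$; hence $A:=R/(I_\X+I_\Y)$ is Artinian and, by construction, an Artinian homogeneous quotient of the coordinate ring $R/I_\X$ of $\X$. Since $\sigma(\X)\ne\sigma(\Y)$, after interchanging the roles of $\X$ and $\Y$ if necessary I may assume $\sigma(\X)<\sigma(\Y)$; by Proposition~\ref{P:202-20100706} it then suffices to show that $\H_A(i)=\H_\X(i)$ for all $0\le i\le\sigma(\X)-1$.

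The central step is to evaluate $\sigma(\X)$ and $\sigma(\Y)$ from the minimal free resolution recorded in Theorem~\ref{T:20110219-204}, making use of the hypothesis that every form has degree at most $2$. Put $d=\sum_i\deg F_i$ and $d''=\sum_j\deg G_j$. The resolution of $R/I_\X$ gives $\H_\X(i)=\deg\X=\binom{d}{2}-\sum_i\binom{\deg F_i}{2}$ for $i\ge d-2$; on the other hand $\deg F_i\le 2$ forces $I_\X$ to have initial degree $\alpha(\X)=d-\max_i\deg F_i\ge d-2$, so that $\H_\X(i)=\binom{i+2}{2}$ for $i\le d-3$. Since $\binom{d-1}{2}<\binom{d}{2}-\sum_i\binom{\deg F_i}{2}$ (which is equivalent to $s>1$), the Hilbert function of $\X$ is strictly increasing up to degree $d-2$, where it reaches $\deg\X$, and constant afterwards; hence $\sigma(\X)=d-1$, and the identical computation gives $\sigma(\Y)=d''-1$. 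Moreover, by Corollary~\ref{C:20131022-205}, $I_\Y$ is minimally generated in degrees $d''-\deg G_j\ge d''-2$, hence $(I_\Y)_i=0$ for $i<d''-2$.

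It remains to combine these observations. The assumption $\sigma(\X)<\sigma(\Y)$ becomes $d<d''$, so
\[
\sigma(\X)-1=d-2\le d''-3<d''-2\le\alpha(\Y),
\]
which forces $(I_\Y)_i=0$ for every $i\le\sigma(\X)-1$; for such $i$ this gives
\[
\H_A(i)=\binom{i+2}{2}-\dim_\k\big((I_\X)_i+(I_\Y)_i\big)=\binom{i+2}{2}-\dim_\k(I_\X)_i=\H_\X(i),
\]
and Proposition~\ref{P:202-20100706} then yields the weak-Lefschetz property of $A$. I expect the one genuinely delicate point to be the exact values $\sigma(\X)=d-1$ and $\sigma(\Y)=d''-1$: this is exactly where the restriction $\deg F_i,\deg G_j\le 2$ is essential, for it is what keeps $I_\Y$ trivial in all degrees below $d''-2$, and hence below $\sigma(\X)$ once the two $\sigma$-values are separated. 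Without a separation of the $\sigma$-values the argument breaks, and indeed the hypothesis $\sigma(\X)\ne\sigma(\Y)$ cannot be dropped, as the Remark following Theorem~\ref{T:204-20100706} illustrates.
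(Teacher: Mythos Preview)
The paper does not supply its own proof of this statement: Theorem~\ref{T:20110720-303} is quoted verbatim from \cite[Theorem~3.3]{S:2} and used as a black box (e.g.\ in the proof of Theorem~\ref{T:20121102-208}). So there is no in-paper argument to compare against; one can only check your sketch for correctness and see how it relates to the surrounding material.

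Your argument is correct. You reduce to Proposition~\ref{P:202-20100706} by showing that, assuming $\sigma(\X)<\sigma(\Y)$, the ideal $I_\Y$ vanishes in all degrees $\le\sigma(\X)-1$; this follows because the generators of $I_\Y$ live in degrees $\ge d''-2=\sigma(\Y)-1\ge\sigma(\X)$. The key numerical facts you derive from the resolution---namely $\sigma(\X)=d-1$ and the generic shape of $\H_\X$---are precisely Propositions~\ref{P:20111004-309} and~\ref{P:20110720-205}, which the paper also imports from \cite{S:2}; you could simply cite those instead of recomputing them. Your identity $\deg\X-\binom{d-1}{2}=s-1$ is a clean way to confirm strict growth up to degree $d-2$, and your observation that the bound $\deg G_j\le 2$ is exactly what forces $\alpha(\Y)\ge d''-2$ pinpoints where the hypothesis is used. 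The proof strategy is the same one the paper employs in the closely related Proposition~\ref{P:20140322-411}, so your sketch is fully in line with the paper's framework even though the paper itself defers the proof to \cite{S:2}.
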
 

%
%

Before we introduce a new Artinian quotient of a coordinate ring of a star-configuration in $\P^2$ having the weak- Lefschetz property without the condition $\sigma(\X)\ne \sigma(\Y)$, we need the following two propositions. 

\begin{pro} [{\cite[Proposition 3.6]{S:2}}]\label{P:20111004-309} Let $\X^{(2,s)}$ be a star-configuration in $\P^2$ defined by general forms $F_1,\dots,F_s$ of degrees $1\le d_1\le \cdots \le d_s$ with $s\ge 3$. Then
$$
\begin{matrix} 
\sigma(\X^{(2,s)})=\Big[{\ds\sum}_{i=1}^{s} d_i\Big]-1. 
\end{matrix}
$$
\end{pro}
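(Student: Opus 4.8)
Write $\X=\X^{(2,s)}$ and $d=d_1+\cdots+d_s$. Since $\sigma(\X)=\min\{t\mid \H_\X(t)=\H_\X(t-1)\}$, it is equivalent to find the least $t$ at which the first difference $\Delta\H_\X(t):=\H_\X(t)-\H_\X(t-1)$ vanishes, and the plan is to read this off from the minimal free resolution recalled in Theorem~\ref{T:20110219-204}.

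First I would pass to an Artinian reduction. As $\X$ is a set of points in $\P^2$, $R/I_\X$ is aCM of codimension $2$ (Theorem~\ref{T:20140307-303}, or classically by Hilbert--Burch), hence Cohen--Macaulay of Krull dimension $1$; thus a general linear form $\ell$ is a non-zero-divisor on $R/I_\X$. Put $A=R/(I_\X+(\ell))$, a graded Artinian quotient of $\bar R:=\k[x_0,x_1]$. The short exact sequence $0\to (R/I_\X)(-1)\xrightarrow{\ \ell\ }R/I_\X\to A\to 0$ gives $\H_A(t)=\Delta\H_\X(t)$, and tensoring the resolution of Theorem~\ref{T:20110219-204} with $\bar R$ (which stays exact and minimal, $\ell$ being a non-zero-divisor) produces the minimal $\bar R$-free resolution
$$
0\to \bar R^{\,s-1}(-d)\longrightarrow \bigoplus_{i=1}^s \bar R(-(d-d_i))\longrightarrow \bar R\longrightarrow A\to 0 .
$$

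Two facts then finish the proof. \emph{(i)} Because $A$ is a graded Artinian quotient of $\k[x_0,x_1]$, if $A_t=0$ then $A_{t+1}=\langle x_0,x_1\rangle A_t=0$; hence $\H_A$ has no internal zeros, so $\sigma(\X)=\min\{t\mid \H_A(t)=0\}$. \emph{(ii)} From the resolution, $\H_A(t)=\dim_\k\bar R_t-\sum_{i=1}^s\dim_\k\bar R_{t-(d-d_i)}+(s-1)\dim_\k\bar R_{t-d}$, where $\dim_\k\bar R_m=\max\{0,m+1\}$. Now a routine check: as a polynomial identity in $t$, $\ (t+1)-\sum_{i=1}^s(t-(d-d_i)+1)+(s-1)(t-d+1)\equiv 0$; since each shift $d-d_i$ is at most $d-1$ and the last shift is $d$, every truncated dimension equals its polynomial value once $t\ge d-1$, whence $\H_A(t)=0$ for all $t\ge d-1$. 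At $t=d-2$ the only discrepancy is in the term $\dim_\k\bar R_{t-d}$, which equals $0$ rather than $-1$, so $\H_A(d-2)=s-1\ne 0$ (here $s\ge3$). Combining \emph{(i)} and \emph{(ii)}: $\H_A(t)\ne0$ for all $t\le d-2$ while $\H_A(d-1)=0$, hence $\sigma(\X)=d-1=\big[\sum_{i=1}^s d_i\big]-1$.

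I expect the only genuine work to be the bookkeeping in the last step, distinguishing where the truncated functions $m\mapsto\max\{0,m+1\}$ depart from the polynomial $m\mapsto m+1$; since the resolution is handed to us by Theorem~\ref{T:20110219-204}, there is no conceptual obstacle. (Equivalently, the resolution shows $\reg(R/I_\X)=d-2$, and for a zero-dimensional scheme $\sigma(\X)=\reg(R/I_\X)+1$; I would nonetheless include the direct Hilbert-function computation to keep the argument self-contained.)
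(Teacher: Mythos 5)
Your argument is correct, but note that the paper does not actually prove this proposition: it is imported verbatim from \cite[Proposition 3.6]{S:2}, so there is no in-paper proof to compare against. What you have written is a legitimate, self-contained derivation using only material the paper does supply, namely the minimal free resolution of $R/I_{\X}$ for type $(2,s)$ in Theorem~\ref{T:20110219-204} together with the aCM property. The key steps all check out: $\ell$ general linear is a non-zero-divisor on the one-dimensional Cohen--Macaulay ring $R/I_\X$, so the Artinian reduction $A$ has $\H_A=\Delta\H_\X$ and inherits the (still minimal, still exact) resolution over $\k[x_0,x_1]$; the alternating-sum identity $(t+1)-\sum_i\bigl(t-(d-d_i)+1\bigr)+(s-1)(t-d+1)=0$ holds; for $t\ge d-1$ every truncated dimension agrees with its polynomial value, giving $\H_A(t)=0$; and at $t=d-2$ the only discrepancy is the $(s-1)\dim_\k\bar R_{-2}$ term, giving $\H_A(d-2)=s-1\neq 0$ (here even $s\ge 2$ suffices). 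Combined with the no-internal-zeros property of standard graded Artinian algebras, this pins down $\sigma(\X)=d-1$. Your approach buys something the citation does not: it works uniformly for arbitrary degrees $d_1\le\cdots\le d_s$ directly from the resolution, rather than through the case analysis on generic Hilbert functions that underlies the quoted statements from \cite{S:2} (compare Proposition~\ref{P:20110720-205}, which is restricted to $d_i\le 2$). The only cosmetic slip is the phrase ``$\reg(R/I_\X)=d-2$'' in your closing parenthesis; what the resolution shows is that the maximal shift in homological degree $i$ is $d$ at $i=2$, i.e.\ $\reg(R/I_\X)=d-2$ indeed, so even that aside is consistent, though it is not needed.
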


\begin{pro} [{\cite[Proposition 2.6]{S:2}}]\label{P:20110720-205} If $\X^{(2,s)}$ is a star-configuration in $\P^2$ of type $(2,s)$ defined by general forms $F_1,\dots,F_s$ of degrees $1\le d_1\le \cdots\le d_s\le 2$ with $s\ge 3$, then $\X^{(2,s)}$ has generic Hilbert function. In particular,  the Hilbert function of $\X^{(2,s)}$ is
$$
\begin{array}{llllllllllllllllllllllllll}
\H_\X & : & 1 & \ds\binom{1+2}{2} & \cdots & \ds\binom{2+(d-3)}{2} & \deg(\X^{(2,s)}) & \ra,
\end{array}
$$
where $d=\sum_{j=1}^s d_j$.
\end{pro}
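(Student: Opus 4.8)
The plan is to read the Hilbert function of $R/I_\X$, where $\X:=\X^{(2,s)}$, directly off its minimal free resolution. Set $d=d_1+\cdots+d_s$ and note $d\ge s\ge 3$. By Theorem~\ref{T:20110219-204} the minimal free resolution of $R/I_\X$ is
$$
0 \ra R^{s-1}(-d) \ra \bigoplus_{i=1}^{s} R(-(d-d_i)) \ra R \ra R/I_\X \ra 0,
$$
and, crucially, since $1\le d_i\le 2$ each minimal generator of $I_\X$ has degree $d-d_i\in\{d-1,d-2\}$, while the last free module $R^{s-1}(-d)$ is concentrated in degree $d$. Hence $(I_\X)_t=0$ for $t\le d-3$, which gives $\H_\X(t)=\binom{t+2}{2}$ on that range, i.e. the initial string $1,3,6,\dots,\binom{d-1}{2}$ of the claimed Hilbert function.

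The next step is to evaluate $\H_\X$ in degree $d-2$. Write $a=\#\{\,i:d_i=1\,\}$ and $b=\#\{\,i:d_i=2\,\}$, so $a+b=s$ and $d=a+2b$. Since $R^{s-1}(-d)$ contributes nothing below degree $d$, we get $\dim_\k (I_\X)_{d-2}=\dim_\k\big(\bigoplus_i R(-(d-d_i))\big)_{d-2}=b$, whence $\H_\X(d-2)=\binom{d}{2}-b$. On the other hand, recall from the Definition-Remark that $\deg(\X)=\sum_{1\le i<j\le s} d_id_j$, so
$$
\deg(\X)=\tfrac12\Big(\big(\sum_{i=1}^{s} d_i\big)^2-\sum_{i=1}^{s} d_i^2\Big)=\tfrac12\big(d^2-a-4b\big),
$$
and a one-line computation using $d=a+2b$ shows $\binom{d}{2}-b=\tfrac12(d^2-d-2b)=\tfrac12(d^2-a-4b)=\deg(\X)$. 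Thus $\H_\X(d-2)=\deg(\X)$.

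Finally, since $\X^{(2,s)}$ is a finite set of points in $\P^2$, the function $\H_\X$ is nondecreasing, bounded above by $\deg(\X)$, and equal to $\deg(\X)$ for $t\gg 0$; combined with $\H_\X(d-2)=\deg(\X)$ this forces $\H_\X(t)=\deg(\X)$ for every $t\ge d-2$. (Equivalently, one may plug $t\ge d-1$ into the displayed resolution and check that the resulting alternating sum is the constant $\deg(\X)$.) Together with the first paragraph this yields $\H_\X(t)=\min\{\deg(\X),\binom{t+2}{2}\}$ for all $t\ge 0$, i.e.\ $\X^{(2,s)}$ has generic Hilbert function, and its Hilbert function is exactly the one displayed. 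The only genuinely non-formal point is the count $\dim_\k(I_\X)_{d-2}=b$ together with the identity $\binom{d}{2}-b=\deg(\X)$; everything else is immediate from Theorem~\ref{T:20110219-204} and the standard fact that the Hilbert function of a set of points is nondecreasing and stabilizes at its degree.
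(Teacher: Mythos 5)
Your argument is correct, and it is worth noting that the paper itself offers no proof of this statement: it is imported verbatim as \cite[Proposition 2.6]{S:2}, so there is no internal proof to compare against. Your derivation is a legitimate, self-contained alternative that stays entirely inside the toolkit the paper has already quoted: from the resolution in Theorem~\ref{T:20110219-204} the generators of $I_\X$ sit in degrees $d-d_i\in\{d-2,d-1\}$ and the last module in degree $d$, so $(I_\X)_t=0$ for $t\le d-3$ and $\dim_\k(I_\X)_{d-2}=b$ with $b=\#\{i:d_i=2\}$; the identity $\binom{d}{2}-b=\tfrac12(d^2-a-4b)=\sum_{i<j}d_id_j=\deg(\X)$ then pins down $\H_\X(d-2)=\deg(\X)$, and monotonicity of the Hilbert function of a finite point set finishes the job. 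I checked the one nontrivial numerical point: with $d=a+2b$ one has $\binom{d}{2}-b=\tfrac12(d^2-d-2b)=\tfrac12(d^2-a-4b)=\deg(\X)$, and also $\binom{d-1}{2}\le\deg(\X)$ since $b\le d/2\le d-1$, so the computed values are indeed consistent with $\H_\X(t)=\min\{\deg(\X),\binom{t+2}{2}\}$. The original proof in \cite{S:2} is a geometric/inductive argument in the spirit of the B\'ezout computations used elsewhere in Section~4 of this paper; your route trades that for a purely numerical read-off from the known resolution, which is shorter but makes the statement logically dependent on \cite[Theorem 2.1]{AS:1} rather than being an independent input.
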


We now discuss an Artinian quotient of a coordinate ring of a star-configuration in $\P^2$ having the Weak Lefschetz property without the condition $\sigma(\X)\ne \sigma(\Y)$.

\begin{pro}\label{P:20140322-411} Let $\X:=\X^{(2,s)}$ and $\Y:=\X^{(2,t)}$ be as in Theorem~\ref{T:20110720-303}. Assume $\deg(F_i)=1$ for every $1\le i \le s$ and $\deg(G_j)\le 2$ for every $1\le j \le t$. Then $R/(I_{\X}+I_\Y)$ has the weak-Lefschetz property. 
\end{pro}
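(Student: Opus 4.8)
The plan is to split the argument according to whether $\sigma(\X)$ and $\sigma(\Y)$ are equal. If $\sigma(\X)\ne\sigma(\Y)$, then, since $\deg(F_i)=1\le 2$ for every $i$, $\deg(G_j)\le 2$ for every $j$, and $s,t\ge 3$, the assertion is exactly Theorem~\ref{T:20110720-303} and there is nothing to prove. So the entire content lies in the case $\sigma(\X)=\sigma(\Y)$, which I would attack through Proposition~\ref{P:202-20100706} applied not to $\X$ but to the point set $\Y$.

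First I would collect the numerical data. By Proposition~\ref{P:20111004-309}, $\sigma(\X)=s-1$ and $\sigma(\Y)=\big[\sum_{j=1}^{t}\deg(G_j)\big]-1$; writing $D=\sum_{j=1}^{t}\deg(G_j)$ and $q=\#\{j:\deg(G_j)=2\}$, the equality $\sigma(\X)=\sigma(\Y)$ becomes $D=s$, and then $q=D-t=s-t$, so $0\le q\le s-3$ because $t\ge 3$. By Proposition~\ref{P:20110720-205}, both $\X$ and $\Y$ have generic Hilbert function; since $\deg(\X)=\binom{s}{2}$ and a direct computation of the second elementary symmetric function of the degrees gives $\deg(\Y)=\binom{s}{2}-q$, and since $q\le s-3$, this yields
$$
\H_\X(i)=\binom{i+2}{2}\ (0\le i\le s-2),\qquad \H_\Y(i)=\binom{i+2}{2}\ (0\le i\le s-3),\qquad \H_\Y(s-2)=\binom{s}{2}-q .
$$
Because $F_1,\dots,F_s,G_1,\dots,G_t$ are general, $\X\cap\Y=\varnothing$, so $A:=R/(I_\X+I_\Y)$ is Artinian and $I_\X\cap I_\Y=I_{\X\cup\Y}$ (both sides are radical with zero locus $\X\cup\Y$); the Mayer--Vietoris exact sequence $0\to R/I_{\X\cup\Y}\to R/I_\X\oplus R/I_\Y\to A\to 0$ then gives $\H_A(i)=\H_\X(i)+\H_\Y(i)-\H_{\X\cup\Y}(i)$ for all $i$.

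The crux is that the linearity of $\X$ forces $I_\X$ to be generated in degree $s-1$ (Corollary~\ref{C:20131022-205}), so $(I_\X)_i=0$ for $i\le s-2$; since $I_{\X\cup\Y}=I_\X\cap I_\Y\subseteq I_\X$, also $(I_{\X\cup\Y})_i=0$ and hence $\H_{\X\cup\Y}(i)=\binom{i+2}{2}$ for $0\le i\le s-2$. Substituting into the Hilbert-function identity, the contributions of $\X$ and of $\X\cup\Y$ cancel: $\H_A(i)=\binom{i+2}{2}=\H_\Y(i)$ for $0\le i\le s-3$, and $\H_A(s-2)=\binom{s}{2}+\big(\binom{s}{2}-q\big)-\binom{s}{2}=\binom{s}{2}-q=\H_\Y(s-2)$. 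Thus $\H_A(i)=\H_\Y(i)$ for all $0\le i\le s-2=\sigma(\Y)-1$, and since $A$ is an Artinian quotient of the coordinate ring $R/I_\Y$ of the finite point set $\Y$, Proposition~\ref{P:202-20100706} shows that $A=R/(I_\X+I_\Y)$ has the weak-Lefschetz property.

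The one step that is not a routine computation is the choice of reference scheme: it is $\Y$ (the configuration carrying the higher-degree forms), not $\X$ or $\X\cup\Y$, that makes Proposition~\ref{P:202-20100706} applicable, and the reason is precisely that $\X$ is linear so $(I_\X)_{\le s-2}=0$ collapses $\H_{\X\cup\Y}$ onto the ambient Hilbert function in the range that matters. I would expect the only delicate point to be the boundary degree $i=s-2$, since that is where $\H_\Y$ first stabilizes and where the cancellation $\binom{s}{2}+(\binom{s}{2}-q)-\binom{s}{2}=\binom{s}{2}-q$ has to be checked by hand.
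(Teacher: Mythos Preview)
Your proof is correct and follows essentially the same strategy as the paper: split on whether $\sigma(\X)=\sigma(\Y)$, and in the equal case verify $\H_A(i)=\H_\Y(i)$ for $0\le i\le \sigma(\Y)-1$ so that Proposition~\ref{P:202-20100706} applies with $\Y$ as the reference scheme. The paper reaches the same conclusion slightly more directly: rather than passing through the Mayer--Vietoris sequence and $\H_{\X\cup\Y}$, it simply observes that $(I_\X)_i=0$ for $i\le s-2$ forces $(I_\X+I_\Y)_i=(I_\Y)_i$, hence $\H_A(i)=\H_\Y(i)$, in that range. Your detour through $I_{\X\cup\Y}\subseteq I_\X$ is correct but unnecessary; your explicit computation of $\deg(\Y)=\binom{s}{2}-q$ is likewise correct but not actually needed, since all one uses is $\H_\Y(s-2)=\deg(\Y)$ regardless of its value. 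Incidentally, your citation of Theorem~\ref{T:20110720-303} in the unequal case is the right one (the paper cites Theorem~\ref{T:204-20100706} there, which strictly speaking requires $\Y$ to be linear).
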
 

\begin{proof} If $\sigma(\X)\ne \sigma(\Y)$, then it is immediate from Theorem~\ref{T:204-20100706}.  Now assume that $\sigma(\X)=\sigma(\Y)$, i.e., 
$
\begin{matrix}
\sum^s_{i=1} \deg(F_i)= \sum^t_{i=1} \deg(G_i).
\end{matrix}
$
It is from Proposition~\ref{P:20110720-205} 
that the Hilbert functions of  $\X$ and $\Y$ are 
$$
\begin{array}{lllllllllllllllllllllllll}
\H_\X & : & 1 &\ds \binom{1+2}{2} & \cdots & \ds\binom{(s-3)+2}{2} & \ds\overset{(s-2)\text{-nd}}{\deg(\X)} & \ra, & \text{and} \\
\H_\Y & : & 1 & \ds\binom{1+2}{2} & \cdots & \ds\binom{(s-3)+2}{2} & \ds\overset{(s-2)\text{-nd}}{\deg(\Y)} & \ra.
\end{array} 
$$
Note that
$$
\H_\X(s-2)=\deg(\X)=\binom{2+(s-2)}{2}.
$$
In other words, $I_\X$ has no generators in degree $s-2$, and thus
$$
\H(R/(I_\X+I_\Y),s-2)=\H_\Y(s-2).
$$
Moreover, since 
$$
\H_\X(i)=\H_\Y(i)=\binom{2+i}{2}
$$
for $0\le i \le s-3$, and so
$$
\H(R/(I_\X+I_\Y),i)=\H_\Y(i)=\binom{2+i}{2}.
$$
Hence we get that
$$
\H(R/(I_\X+I_\Y),i)=\H(R/I_\Y,i)
$$
for every $0\le i\le s-2=\sigma(\Y)-1$. Furthermore, since
$$
R/(I_\X+I_\Y)\simeq (R/I_\Y)/((I_\X+I_\Y)/I_\Y)
$$
is an Artinian quotient of the coordinate ring $R/I_\Y$, by Proposition~\ref{P:202-20100706} $R/(I_\X+I_\Y)$ has the weak Lefschetz property, as we wished.  
\end{proof}  

By Corollary~\ref{C:20130107-203}, we often use a product of linear forms $L_1,\dots,L_d$ in $R=\k[x_0,\dots,x_n]$ instead of  general forms $F$ of degree $d$ to construct a star-configuration in $\P^n$ for this section. In \cite{S:2}, the author showed that if $\X:=\X^{(2,s)}$ is a star-configuration in $\P^2$ of type $(2,s)$ defined by general quadratic forms $F_1,\dots,F_s$ and $\Y:=\X^{(2,s+1)}$ is a star-configuration in $\P^2$ of type $(2,s+1)$ defined by general quadratic forms $G_1,\dots,G_s$ and a general linear form $L$, then an Artinian ring $R/(I_\X+I_\Y)$ has the weak-Lefschetz property with a Lefschetz element $L$ (see \cite[Theorem 3.7] {S:2}). We shall generalize this result with the condition $\deg(F_i)=\deg(G_i)\le 2$ for every $i=1,\dots,s$. 

For the rest of this section, to distinguish two star-configurations, we shall use the following notations and symbols for lines and points in pictorial description.
$$\begin{array}{lllllllllllllllllllll}
\text{a solid line } & \begin{picture}(325,0)(35,26)
\psline(1,1)(2,1)  \end{picture} \hskip -10.8 true cm & \L_i & 
\text{is a line defined by a linear form} & L_i,  \text{ and}\\
\text{a dashed line } & \begin{picture}(325,0)(35,26)
\psline[linestyle=dashed](1,1)(2,1)  \end{picture} \hskip -10.8 true cm & \M_i & 
\text{is a line defined by a linear form} & M_i
\end{array}
$$
for $1\le i\le s$ with $s\ge 2$. We also define that 
$$
\begin{array}{lllllllllllllllllll}
P_{i,j} & \text{is a point defined by linear forms }  L_i, L_j , \text{ and}\\
Q_{i,j} & \text{is a point defined by linear forms } M_i, M_j
\end{array}
$$
where $L_i,L_j$ and $M_i,M_j$ are linear forms in $R$ for $1\le i < j\le s$ with $s\ge 2$.

%

\begin{lem}\label{L:20111214-402}  Let $\X$ be the union of two star-configurations $\X_1:=\X^{(2,3)}$ and $\X_2:=\X^{(2,2)}$ in $\P^2$ of type $(2,3)$ and $(2,2)$, respectively. Then $\X$ has  generic Hilbert function
$$
\begin{matrix}
1 & 3 & 6 & 10 & 15 & 16 & \ra .
\end{matrix} 
$$
\end{lem}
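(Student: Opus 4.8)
We want to compute the Hilbert function of $\X = \X_1 \cup \X_2$, where $\X_1 = \X^{(2,3)}$ is a star-configuration in $\P^2$ of type $(2,3)$ (so $3$ points, the pairwise intersections of $3$ general lines) and $\X_2 = \X^{(2,2)}$ is a star-configuration of type $(2,2)$ (a single point, the intersection of $2$ general lines). The claim is that the $6$ points of $\X$ impose independent conditions on forms of each degree, so $\H_\X$ rises as fast as possible until it hits $\deg(\X) = 3 + 1 + \text{(extra from the configuration structure)}$. Wait — let me recount: $\deg(\X_1) = \binom{3}{2} = 3$ and $\deg(\X_2) = \binom{2}{2} = 1$, but $\deg(\X)$ as a set of distinct points is $3 + 1 = 4$ only if the points are distinct. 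However the stated Hilbert function ends at $16$, which is $\binom{6}{2} - $ nothing: in fact $\binom{2+4}{2} = 15$ and $16 = \binom{2+4}{2}+1$... Actually $16$ is not a binomial coefficient $\binom{i+2}{2}$, so $\X$ is \emph{not} a set of $16$ distinct reduced points; rather $\deg(\X) = 16$ counted with the scheme structure coming from the two configurations (each $\X_i$ being a zero-dimensional scheme of degree $\binom{3}{2}\cdot$... ). So I first need to pin down precisely what "$\X_1 \cup \X_2$" means — presumably the scheme defined by $I_{\X_1} \cap I_{\X_2}$, or perhaps $\X_i$ here are themselves star-configurations defined by \emph{general forms of degree $\le 2$} as in the surrounding discussion, so that $\deg(\X_1)$ and $\deg(\X_2)$ are larger. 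The plan is first to fix the degrees: reading the section context (Proposition~\ref{P:20110720-205}, the discussion of $\deg(F_i)=\deg(G_i)\le 2$), these are configurations of quadratic forms, so $\deg(\X^{(2,3)})$ and $\deg(\X^{(2,2)})$ should be computed from the degree formula in the Definition-Remark with appropriate $d_i$, giving total degree $16$.

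**Main approach.** Once the degrees are fixed, the cleanest route is via the exact sequence
$$
0 \to R/(I_{\X_1}\cap I_{\X_2}) \to R/I_{\X_1} \oplus R/I_{\X_2} \to R/(I_{\X_1}+I_{\X_2}) \to 0,
$$
which gives $\H_\X(t) = \H_{\X_1}(t) + \H_{\X_2}(t) - \H(R/(I_{\X_1}+I_{\X_2}),t)$. By Proposition~\ref{P:20110720-205} (or Corollary~\ref{C:20121023-301} in the linear case), each $\H_{\X_i}$ is generic — it grows like $\binom{t+2}{2}$ until it saturates at $\deg(\X_i)$. So the entire problem reduces to showing that $I_{\X_1} + I_{\X_2}$ (an Artinian ideal) has the "expected" Hilbert function in the relevant range, i.e. that in every degree $t$ below saturation, $(I_{\X_1})_t$ and $(I_{\X_2})_t$ meet as transversally as possible inside $R_t$. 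Concretely I would: (i) write down $\H_{\X_1}$ and $\H_{\X_2}$ explicitly from Proposition~\ref{P:20110720-205}; (ii) determine $\sigma(\X_1)$ and $\sigma(\X_2)$ from Proposition~\ref{P:20111004-309}; (iii) show that for $t$ up to the point where $\H_\X$ must saturate, $\dim_\k (I_{\X_1}+I_{\X_2})_t = \dim_\k (I_{\X_1})_t + \dim_\k (I_{\X_2})_t$ (independence), which follows because the two configurations are built from \emph{general} linear forms and hence are in sufficiently general position; and (iv) check that after saturation $\H_\X$ is constant equal to $16$.

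**Alternative, more hands-on approach.** Since $\X_1$ and $\X_2$ are small and explicit, one can instead argue directly with the geometry: realize $\X_1$ as the three double-or-whatever points cut out by three general conics pairwise, $\X_2$ by two general conics, all five conics general, and invoke a Bézout/genericity count to see that the $16$ points (with multiplicity) of $\X$ impose independent conditions on curves of degree $t$ for $t \le 4$ and that a general quintic already contains them, giving precisely $1,3,6,10,15,16,16,\dots$. This is essentially the statement that a general union of this type has generic Hilbert function, which in $\P^2$ for zero-dimensional schemes of this small degree is standard (e.g. by an Alexander–Hirschowitz-type or a direct semicontinuity argument from a degenerate configuration).

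**Main obstacle.** The genuine difficulty is the independence/transversality step: proving that $(I_{\X_1})_t \cap (I_{\X_2})_t$ has the expected (minimal possible) dimension in each degree $t < \sigma$. This is exactly the kind of "two general star-configurations are in general position" statement that the rest of Section 4 wrestles with, and it is false without genericity (cf. the Remark after Theorem~\ref{T:204-20100706}, where $\X^{(2,4)}\cup\X^{(2,4)}$ with quadratic forms fails). For the specific small numerology here — type $(2,3)$ and $(2,2)$, degrees bounded by $2$ — I expect it to go through, most safely by specializing the general forms to an explicit configuration of lines/conics, computing the Hilbert function there (by hand or noting it is generic at the special point), and then using upper semicontinuity of the Hilbert function to conclude it is generic (hence equal to the stated values) for general forms. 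So the plan's linchpin is: \emph{exhibit one good degeneration, compute its Hilbert function to be $1,3,6,10,15,16,\to$, and lift by semicontinuity.}
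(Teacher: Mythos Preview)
Your proposal eventually lands on the correct setup ($\X_1$ defined by three general quadrics, $\X_2$ by two, so $\deg\X = 12 + 4 = 16$) and on the correct target: everything comes down to $(I_\X)_4 = 0$, equivalently $(I_{\X_1})_4 \cap (I_{\X_2})_4 = 0$. Your ``alternative, more hands-on approach'' is in fact exactly the paper's route, but the paper carries out the step you leave as a black box.

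Concretely, the paper writes each quadric as a product of two linear forms, $F_i = L_{2i-1}L_{2i}$ and $G_j = M_{2j-1}M_{2j}$, and then \emph{further specializes} the arrangement so that $L_1$ passes through one point of $\X_2$ and $M_2$ passes through one point of $\X_1$. On this configuration a repeated application of B\'ezout forces any $N \in (I_\X)_4$ to be divisible successively by $L_1$, $L_2$, $M_2$, $L_5$, hence $N = \alpha\, L_1 L_2 M_2 L_5$; but this product misses the two points $P_{4,6}$ and $Q_{1,3}$ of $\X$, so $\alpha = 0$. Semicontinuity (implicit in the paper's ``without loss of generality'') then gives $(I_\X)_4 = 0$ for the general configuration, and strict growth of the Hilbert function of a reduced zero-dimensional scheme forces $\H_\X(5)=16$.

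Your exact-sequence framing is a clean repackaging but buys nothing extra here: it reduces to the same vanishing $(I_\X)_4 = 0$, and your sentence ``which follows because the two configurations are built from general linear forms and hence are in sufficiently general position'' is precisely the gap --- genericity alone is not an argument, as the Remark after Theorem~\ref{T:204-20100706} already warns. The B\'ezout computation on a well-chosen degeneration is the actual content of the proof; your plan is correct but incomplete without it.
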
 

\begin{proof} Without loss of generality, we assume that $\X_1$ is defined by quadratic forms $L_1L_2$, $L_3L_4$, and $L_5L_6$, where $L_i$ is a linear form defining a line $\L_i$  for $i=1,\dots,6$, and that $\X_2$ is defined by quadratic forms $M_1M_2$ and $M_3M_4$, where $M_i$ is a linear form defining a line $\M_i$ for $i=1,\dots,4$.  Furthermore, we assume that $L_1$ vanishes on four points in $\X_1$ and one point  defined by two linear forms $M_1$ and $M_4$ in $\X_2$, and that $M_2$ vanishes on two points in $\X_2$ and one point  defined by linear forms $L_3$ and $L_6$ in $\X_1$ (see Figure~\ref{FIG:20111214-001} again). 

\begin{figure}[ht] 
\vskip .3pc
\begin{picture}(325, 90)(-50,5)

\psline(1.82,-.2)(4.7,3.3)  

\put(136,95){$\L_{1}$} 

\psline(2.1,-.4)(4.7,2.8)  

\put(136,78){$\L_{2}$}

\psline(2.3,1)(6.3,1)  

\psline(2.3,1.3)(6.3,1.3)  

\put(182,35){$\L_{5}$}

\put(182,25){$\L_{6}$} 

\psline(4,3.3)(6,.5)  

\psline(4,2.7)(6,-0.1) 

\put(100,95){$\L_{3}$}

\put(100,78){$\L_{4}$}

\psline[linestyle=dashed](1.7,0.05)(3.3,0.59) 

\put(95,15){$\M_1$}

\psline[linestyle=dashed](2,-.3)(6.2,1.2) 

\put(95,-5){$\M_2$}

\psline[linestyle=dashed](2.1,.7)(3,-.2) 

\put(50,23){$\M_3$}

\psline[linestyle=dashed](1.88,0.5)(2.77,-.4) 

\put(38,13){$\M_4$}

\end{picture}
\vskip .9pc
\caption{} \label{FIG:20111214-001}
\end{figure}

Let $N\in (I_\X)_4,$   then by {\em Bez\'{o}ut}'s Theorem, 
$$
N=\alpha L_1L_2M_2L_5
$$
for some $\alpha\in \k$. Moreover, since $N$ has to vanish on two more points $P_{4,6},Q_{1,3}$ in $\X$, where none of $L_1$, $L_2$, $M_2$, and $L_5$ can vanish, we get that $N=0$. Therefore, the Hilbert function of $\X$ is 
$$
\begin{matrix}
1 & 3 & 6 & 10 & 15 & 16 & \ra ,
\end{matrix} 
$$
as we wished. 
\end{proof}

\begin{thm} \label{T:20111106-401} Let $\X$  be the union of two star-configurations $\X_1:=\X^{(2,3)}$ and $\X_2:=\X^{(2,3)}$ in $\P^2$ of type $(2,3)$. Then $\X$ has  generic Hilbert function
$$
\begin{matrix}
1 & 3 & 6 & 10 & 15 & 21 & 24 & \ra .
\end{matrix} 
$$
\end{thm}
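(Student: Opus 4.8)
The plan is to compute the Hilbert function of $\X = \X_1 \cup \X_2$ degree by degree, exactly as in Lemma~\ref{L:20111214-402}, using B\'ezout's theorem to control forms of small degree. First I would set up the configuration concretely: write $\X_1$ as the star-configuration of type $(2,3)$ defined by quadratic forms $L_1L_2$, $L_3L_4$, $L_5L_6$ (so $\X_1$ consists of the $\binom{3}{2}\cdot 4 = 12$ intersection points $P_{i,j}$ of the lines $\L_1,\dots,\L_6$ in the appropriate pattern) and $\X_2$ as the star-configuration of type $(2,3)$ defined by $M_1M_2$, $M_3M_4$, $M_5M_6$, with $12$ points $Q_{i,j}$. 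By Theorem~\ref{T:20140307-303} (equivalently Proposition~\ref{P:20110720-205}), each $\X_k$ has generic Hilbert function $1,3,6,10,15,21,24 \to$, so each has $24$ points and $\sigma(\X_k) = 7$ (Proposition~\ref{P:20111004-309} with $d_1=\dots=d_3=2$, giving $\sigma = 6-1 \cdot$ wait, $\sum d_i = 6$, so $\sigma = 6 \cdot \dots$; in fact the Hilbert function stabilizes at degree $6$, consistent with the displayed values). Thus $\deg(\X) = 48$, and the target Hilbert function $1,3,6,10,15,21,24 \to$ would say $\X$ imposes independent conditions on forms of degrees $0$ through $5$ and then $(I_\X)_6$ has codimension exactly $24$ in $R_6$.

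The key steps, in order: (1) For $t \le 5$, show $\H_\X(t) = \binom{t+2}{2}$, i.e. $(I_\X)_t = 0$. By B\'ezout, any form of degree $t \le 5$ vanishing on all $24$ points of $\X_1$ must — after arranging the lines generically — be divisible by products of the $L_i$; a degree-$5$ form cannot contain enough linear factors to kill all $24$ points of $\X_1$ without being zero (this is exactly the generic Hilbert function statement for $\X_1$, already known), so a fortiori $(I_\X)_t = 0$. So this step is immediate from $\X_1$ alone having generic Hilbert function through degree $5$. (2) For $t = 6$: I need $\H_\X(6) = 24$, i.e. $\dim (I_\X)_6 = 28 - 24 = 4$. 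Since $\X_1$ has generic Hilbert function, $\dim(I_{\X_1})_6 = 28 - 24 = 4$, and $(I_{\X_1})_6$ is spanned by the four products of three of the six lines that form the "even" combinations — actually $(I_{\X_1})_6$ is the degree-$6$ piece of a codimension-$2$ aCM ideal with $\binom{3}{1}=3$ generators in degree $4$ (from Corollary~\ref{C:20131022-205}: generators $L_3L_4L_5L_6$, $L_1L_2L_5L_6$, $L_1L_2L_3L_4$), so $\dim(I_{\X_1})_6 = 3 \cdot \dim R_2 - (\text{first syzygies in degree } 6)$; by Theorem~\ref{T:20110219-204} there are $s-1 = 2$ syzygies in degree $d = 6$, giving $3\cdot 6 - 2\cdot 3 + \dots$; in any case it equals $4$ by genericity. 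Then $(I_\X)_6 = (I_{\X_1})_6 \cap (I_{\X_2})_6$, and I must show this intersection of two $4$-dimensional subspaces of the $28$-dimensional $R_6$ is itself $4$-dimensional — equivalently that $(I_{\X_1})_6 + (I_{\X_2})_6 = R_6$, so the two subspaces are in direct sum complementary position modulo the overlap, forcing $\dim((I_{\X_1})_6 + (I_{\X_2})_6) = 28$, hence $\dim((I_{\X_1})_6 \cap (I_{\X_2})_6) = 4 + 4 - 28 < 0$, which is absurd — so in fact I want $\dim((I_{\X_1})_6 \cap (I_{\X_2})_6) = 4$ forces $(I_{\X_1})_6 = (I_{\X_2})_6$ or a more delicate count. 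Let me recompute: I actually need to show $\H_\X(6) = \H_{\X_1}(6) = 24$, i.e. $\X_2$ imposes \emph{no new conditions} beyond $\X_1$ in degree $6$ — but that is false since $\deg \X = 48 \ne 24$. The correct target is $\H_\X(6) = 24$ \emph{meaning the Hilbert function has already stabilized}, so I need $\dim(I_\X)_6 = 28 - 24 = 4$; this requires $\dim((I_{\X_1})_6 \cap (I_{\X_2})_6) = 4$.

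So the heart of the matter, and the main obstacle, is step (2): computing $\dim\big((I_{\X_1})_6 \cap (I_{\X_2})_6\big)$. The approach I would take is the B\'ezout argument of Lemma~\ref{L:20111214-402} pushed to degree $6$: a sextic $N \in (I_{\X_1})_6$ lying in a $4$-dimensional space is a combination of products of the $L_i$'s; writing $N$ as such and imposing that $N$ also vanish on the $24$ points of $\X_2$, one uses that the lines $\M_j$ are in general position relative to the $\L_i$ to conclude that the only sextics vanishing on all $48$ points are the ones in $(I_{\X_1})_6 \cap (I_{\X_2})_6$ of dimension exactly $4$; concretely, one arranges (as in the figure) which $L_i$ passes through which $Q_{j,k}$ and shows that forcing divisibility collapses the $4$-dimensional space of $\X_1$-sextics onto a $4$-dimensional subspace that automatically vanishes on $\X_2$ — or, more cleanly, I would use Proposition~\ref{P:201450307-201}/Basic Double Linkage or a direct dimension count via the minimal free resolutions of $I_{\X_1}$ and $I_{\X_2}$ from Theorem~\ref{T:20140307-303} together with $\dim (I_{\X_1} + I_{\X_2})_6 \le 28$. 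The cleanest path: show $(I_{\X_1})_6 + (I_{\X_2})_6 = R_6$ by exhibiting, for each monomial basis element of $R_6$, a representation, using the B\'ezout-style separation of the two configurations as in Lemma~\ref{L:20111214-402}; then $\dim(I_{\X_1})_6 \cap (I_{\X_2})_6 = 4 + 4 - 28$ is negative — so this equality \emph{cannot} hold, and instead the honest computation must give $\dim((I_{\X_1})_6 \cap (I_{\X_2})_6) = 4$ directly by showing $(I_{\X_1})_6 = (I_{\X_2})_6$ is impossible while the intersection is still forced to be exactly the span of the $4$ "diagonal" products. I expect this delicate intersection dimension count — showing it is neither too big (which would violate $\deg \X = 48$, impossible) nor forced smaller — to be the crux, and I would handle it by the explicit B\'ezout/line-incidence bookkeeping illustrated in Figure~\ref{FIG:20111214-001}, checking that a sextic vanishing on all $48$ points must vanish identically once it is stripped of the forced linear factors.
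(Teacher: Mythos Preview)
Your proposal contains a basic miscount that derails both steps. Each $\X_k$ has $\binom{3}{2}\cdot 4 = 12$ points (as you yourself compute at the outset), so by Proposition~\ref{P:20110720-205} its Hilbert function is $1,3,6,10,12\to$, \emph{not} $1,3,6,10,15,21,24\to$. Consequently $(I_{\X_1})_4$ already has dimension $15-12=3$ and $(I_{\X_1})_5$ has dimension $21-12=9$, so your step~(1) --- that $(I_\X)_t=0$ for $t\le 5$ follows ``immediately from $\X_1$ alone'' --- is false. Showing $(I_\X)_5=0$ genuinely requires the interaction of the two configurations. Likewise in step~(2) the correct numbers are $\dim(I_{\X_k})_6=28-12=16$, so the target $\dim\big((I_{\X_1})_6\cap(I_{\X_2})_6\big)=4$ is equivalent to $(I_{\X_1})_6+(I_{\X_2})_6=R_6$; this is a reasonable reformulation, but your discussion of it is built on the wrong dimensions and never arrives at an argument.

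The paper handles the two steps quite differently from what you sketch. For $(I_\X)_5=0$ it places the twelve lines in a specific incidence pattern (Figure~\ref{FIG:20111224-002}) so that, by successive applications of B\'ezout, any quintic through $\X$ is forced to equal $\alpha\,L_1L_2M_3M_4L_5$, and then one leftover point kills $\alpha$. For degree~$6$ the paper does \emph{not} attempt a direct intersection count; instead it builds up through two intermediate unions $\Y$ and $\Z$ (replacing $M_5M_6$ first by nothing, then by $M_5$ alone), invokes Lemma~\ref{L:20111214-402} for $\Y$, and uses short exact sequences of the form
\[
0\;\to\;R/I_\Z\;\to\;R/I_\Y\oplus R/(M_5,G_4)\;\to\;R/(I_\Y,M_5,G_4)\;\to\;0
\]
to pin down $\H_\Z$ and then $\H_\X$. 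If you want to salvage a direct approach, the honest task is to prove $(I_{\X_1})_6+(I_{\X_2})_6=R_6$ for two generic such configurations; that is not obviously easier than the paper's inductive bookkeeping.
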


\begin{proof} 
Without loss of generality, we assume that $\X_1$ and $\X_2$ are defined by quadratic forms $L_iL_{i+1}$ and $M_iM_{i+1}$,   respectively,  for $i=1,3,5$. (see Figure~\ref{FIG:20111224-002}). We also assume that
$$
\begin{array}{llllllllllllll}
\text{a linear form $L_1$ vanishes on $6$ points } P_{1,3}, P_{1,4}, P_{1,5}, P_{1,6},   Q_{1,6},Q_{2,5}, \text{ and}\\
\text{a linear form $L_2$ vanishes on $5$ points } P_{2,3}, P_{2,4}, P_{2,5}, P_{2,6},   Q_{2,6}.
\end{array}
$$

\begin{figure}[ht] 
\vskip .3pc
\begin{picture}(325, 90)(-50,5)

\psline(1.82,-.2)(4.7,3.3)  

\put(136,95){$\L_{1}$} 

\psline(2.1,-.4)(4.7,2.8)  

\put(136,78){$\L_{2}$}

\psline(2.3,1)(6.3,1)  

\psline(2.3,1.3)(6.3,1.3)  

\put(182,35){$\L_{5}$}

\put(182,25){$\L_{6}$} 

\psline(4,3.3)(6,.5)  

\psline(4,2.7)(6,-0.1) 

\put(100,95){$\L_{3}$}

\put(100,78){$\L_{4}$}

\psline[linestyle=dashed](1.4,0.55)(4.2,0.55) 

\put(122,12){$\M_1$}

\psline[linestyle=dashed](1.5,0.2)(4.1,0.23) 

\put(120,3){$\M_2$}

\psline[linestyle=dashed](2.33,.7)(3.68,-1.7) 

\put(60,-58){$\M_3$}

\psline[linestyle=dashed](1.88,0.7)(3.22,-1.7) 

\put(75,-58){$\M_4$}

\psline[linestyle=dashed](3.5,0.9)(2.5,-1.7) 

\put(90,-58){$\M_5$}

\psline[linestyle=dashed](3.9,0.9)(2.9,-1.7) 

\put(105,-58){$\M_6$}
\end{picture}
\vskip 4.5pc
\caption{} \label{FIG:20111224-002}
\end{figure}

For every $N\in R_5$, by {\em Bez\'{o}ut}'s theorem, 
$$
N=\alpha L_1L_2M_3M_4L_5,
$$
for some $\alpha \in \k$. Moreover, since $N$ has to vanish on a point $P_{3,6}$, where none of $L_1,L_2,M_3,M_4$, and $L_5$  vanishes, we have $N=0$. Hence the Hilbert function of $\X$ is of the form
\begin{equation}\label{EQ:20111225-404} 
\begin{array}{llllllllllll}
\H_\X & : & 1 & 3 & 6 & 10 & 15 & 21 & \cdots.
\end{array}
\end{equation}

Let $\Y_1:=\X_1$, and $\Y_2:=\X^{(2,2)}$ be a star-configuration in $\P^2$ of type $(2,2)$ defined by quadratic forms $M_1M_2$ and $M_3M_4$, respectively. Define $\Y:=\Y_1\cup\Y_2$.  By Lemma~\ref{L:20111214-402} the Hilbert function of $\Y$ is
\begin{equation}\label{EQ:20111225-405} 
\begin{array}{llllllllllll}
\H_\Y & : & 1 & 3 & 6 & 10 & 15 & 16 & \ra.  
\end{array}
\end{equation} 

Let $\Z_1:=\X_1$ and $\Z_2$ be a star-configuration in $\P^2$ of type $(2,3)$ defined by $M_1M_2,M_3M_4$, and $M_5$. Define $\Z:=\Z_1\cup\Z_2$ (see Figure~\ref{FIG:20111224-002}), and  let  $G_4=M_1\cdots M_4$.

%
%
%
%
%
%
%
%
%
%
%
%
%
%
%
%
%
%
%
%
%
%
%
%

Using equation~\eqref{EQ:20111225-405} and the exact sequence
$$
\begin{matrix}
0 & \ra & R/I_\Z & \ra & R/I_\Y \bigoplus R/(M_5,G) & \ra & R/(I_\Y,M_5,G) & \ra & 0, 
\end{matrix}
$$
we get that
$$
\begin{array}{rcccccccclrrr}
\H(R/I_\Z,-)    & : & 1 & 3 & 6 & 10 & 15 & - & 20 & \ra, \\
\H(R/I_\Y,-)    & : & 1 & 3 & 6 & 10 & 15 & 16     & 16 & \ra, \\
\H(R/(M_5,G_4),-) & : & 1 & 2 & 3 &  4 & 4  & 4      &  4 & \ra, \\
\H(R/(I_\Y,M_5,G_4),-) 
                & : & 1 & 2 & 3 &  4 &  4 & -  & 0  & \ra, \\
\H(R/(I_\Y,M_5),-) 
                & : & 1 & 2 & 3 &  4 &  5 & 1      & 0  & \ra \kern-.2em{.}
\end{array} 
$$
Moreover, since $\H(R/(I_\Y,M_5),5)=1$, we see that
$$
\H(R/I_\Y,M_5,G_4),5)=0, \text{ or \,} 1.
$$
In other words, there are only two possible Hilbert functions for $R/I_\Z$:
$$
\begin{array}{lllllllllllllllllllllll}
(1) & : & 1 & 3 & 6 & 10 & 15 & 20 & 20 & \ra, & \text{or} \\
(2) & : & 1 & 3 & 6 & 10 & 15 & 19 & 20 & \ra.
\end{array} 
$$

However, using the same argument as in the proof of Theorem 3.3 in \cite{S:1}, one can show that (1) is the Hilbert function of $\Z$. 

Recall that  $\X_1$ and $\X_2$ are star-configurations in $\P^2$ of type $(2,3)$  defined by quadratic forms $L_iL_{i+1}$ and $M_iM_{i+1}$,  respectively,   for $i=1,3,5$. Let  $\X:=\X_1\cup\X_2$ and $G_4=M_1\cdots M_4$. 

By equation~\eqref{EQ:20111225-404} and the following exact sequence
$$
\begin{matrix}
0 & \ra & R/I_\X & \ra & R/I_\Z \bigoplus R/(M_6,G) & \ra & R/(I_\Z,M_6,G) & \ra & 0, 
\end{matrix}
$$
we get that
$$
\begin{array}{rccccccccclrrr}
\H(R/I_\X,-)    & : & 1 & 3 & 6 & 10 & 15 & 21 & - & 24 & \ra, \\
\H(R/I_\Z,-)    & : & 1 & 3 & 6 & 10 & 15 & 20 & 20     & 20 & \ra, \\
\H(R/(M_6,G_4),-) & : & 1 & 2 & 3 &  4 & 4  & 4  &  4     &  4 & \ra, \\
\H(R/(I_\Z,M_6,G_4),-) 
                & : & 1 & 2 & 3 &  4 &  4 & 3  & -  & 0  & \ra, \\
\H(R/(I_\Z,M_6),-) 
                & : & 1 & 2 & 3 &  4 &  5 & 5  & 0      & 0  & \ra\kern-.25em{.}
\end{array} 
$$
Moreover, since $\H(R/(I_\Z,M_6),6)=0$, we see that $\H(R/I_\Z,M_6,G_4),6)=0$ as well. Thus the Hilbert function of $\X$ is
$$
\begin{array}{rccccccccclrrr}
\H(R/I_\X,-)    & : & 1 & 3 & 6 & 10 & 15 & 21 & 24 & \ra, 
\end{array} 
$$
as desired. 
\end{proof}

By {\em Bez\'{o}ut}'s Theorem and  Theorem~\ref{T:20111106-401}, we obtain the following proposition by double induction on $d$ and $s$
 and thus omit the proof. 

\begin{pro} \label{P:20111231-501} Let $\X$ be the union of two star-configurations in $\P^2$ of type $(2,s)$ defined by general forms in $R=\k[x_0,x_1,x_2]$ of degree $d$ with $s\ge 4$ and $d\ge 2$. Then
$$
(I_\X)_{ds}=\{0\}.
$$ 
\end{pro}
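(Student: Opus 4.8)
The statement to prove is Proposition~\ref{P:20111231-501}: if $\X=\X_1\cup\X_2$ is the union of two star-configurations in $\P^2$ of type $(2,s)$ defined by general forms of degree $d$ with $s\ge 4$ and $d\ge 2$, then $(I_\X)_{ds}=\{0\}$.

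The plan is a double induction on $d$ and $s$ (with $d\ge 2$, $s\ge 4$), using B\'ezout's theorem to strip lines off one of the two star-configurations and descend to a union of two star-configurations with smaller parameters. By Corollary~\ref{C:20130107-203} I may assume $\X_1$ and $\X_2$ are \emph{linear} star-configurations: write $F_i=L_{i1}\cdots L_{id}$ and $G_j=M_{j1}\cdots M_{jd}$ with the $2sd$ forms $L_{ik},M_{jk}$ general linear forms. By Corollary~\ref{C:20131022-205}, $\X_1$ is the reduced set of the $\binom{s}{2}d^2$ points $L_{ia}\cap L_{jb}$ ($i<j$), and similarly for $\X_2$; for a general configuration no point of $\X_2$ lies on any line $\{L_{ik}=0\}$. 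Since $\dim_\k(I_\X)_{ds}$ is upper semicontinuous in the configuration, it suffices to produce one configuration of the required combinatorial type with $(I_\X)_{ds}=0$; this is what permits imposing the auxiliary collinearities below.

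For the step reducing $d$ (fix $d\ge 3$, $s\ge 4$, and assume the statement for $d-1$ and all $s\ge 4$): suppose $0\neq N\in(I_\X)_{ds}$, and choose the configuration so that, for each $i$, the line $\{L_{id}=0\}$ passes --- besides through the $(s-1)d$ unavoidable points $L_{id}\cap L_{jk}$ ($j\neq i$) of $\X_1$ --- through at least $d+1$ points of $\X_2$ of the form $M_{ad}\cap M_{bc}$ with $a\neq b$. For $s\ge 4$ and $d\ge 2$ there are enough such points and enough freedom in the configuration to arrange this simultaneously for all $i$ while keeping the $2\binom{s}{2}d^2$ points of $\X$ distinct. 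Then $\{L_{id}=0\}$ carries at least $(s-1)d+(d+1)=ds+1>\deg N$ points of $\X$, so $L_{id}\mid N$ for every $i$, whence $N=\big(\prod_i L_{id}\big)N'$ with $\deg N'=(d-1)s$. Let $\X_1^\flat,\X_2^\flat$ be the type-$(2,s)$ linear star-configurations defined by $\prod_{k<d}L_{ik}$ and $\prod_{k<d}M_{jk}$; by construction no point of $\X^\flat:=\X_1^\flat\cup\X_2^\flat$ lies on any $\{L_{id}=0\}$, so $N'$ vanishes on $\X^\flat$, and hence $N'=0$ by the inductive hypothesis --- a contradiction. The step reducing $s$ (fix $d\ge 2$, $s\ge 5$, and assume the statement for $s-1$) is identical, with the $d$ lines $\{L_{sk}=0\}$ of the last group of $\X_1$ in place of $\{L_{id}=0\}$ and the auxiliary points taken from $\X_2$ involving its last group; one strips $\prod_k L_{sk}$ and descends to $\X_1^-\cup\X_2^-$, a union of two type-$(2,s-1)$ linear star-configurations, of degree $(s-1)d=\deg N'$.

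This leaves the base case $(d,s)=(2,4)$, to which all larger parameters have been reduced. Here I would argue exactly as in the proof of Theorem~\ref{T:20111106-401}: realize $\X$ as a configuration of $16$ lines with prescribed concurrences, write a hypothetical $N\in(I_\X)_8$ by B\'ezout's theorem as a product of the linear forms forced by chains of collinear points of $\X$, and use the remaining prescribed incidences --- together with the Hilbert function of a union of two type-$(2,3)$ configurations supplied by Theorem~\ref{T:20111106-401} and the Basic Double $G$-Linkage exact sequence of Proposition~\ref{P:201450307-201} --- to force $N=0$; upper semicontinuity then yields $(I_\X)_8=0$ for the general configuration.

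I expect the main obstacle to be the bookkeeping at the boundary $s=4$: keeping the imposed collinearities mutually consistent, the resulting points of $\X$ distinct and the configuration still honest, and verifying that after stripping, the residual form is forced to vanish on precisely a union of two smaller star-configurations. The point counts are tightest there --- each stripped line carries exactly $ds+1$ points and no more --- so there is no slack, and the existence of a suitable special configuration has to be checked with some care rather than quoted as obvious.
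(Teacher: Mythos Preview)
Your approach is essentially the paper's: the paper proves this proposition only by the sentence ``By B\'ezout's Theorem and Theorem~\ref{T:20111106-401}, we obtain the following proposition by double induction on $d$ and $s$ and thus omit the proof,'' so there is no detailed argument to compare against, and your scheme --- pass to products of linear forms via Corollary~\ref{C:20130107-203}, specialise the configuration, peel off lines by B\'ezout, and descend to smaller parameters with Theorem~\ref{T:20111106-401} anchoring the bottom --- is exactly what the paper is gesturing at.

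That said, there is one point in your sketch that is not yet nailed down and that the paper does not help you with. In the $d$-reduction you require each line $\{L_{id}=0\}$ to pass through at least $d+1$ points of $\X_2$. A line in $\P^2$ has two degrees of freedom, so it can be forced through two prescribed points and no more; to get $d+1$ points on it you must instead arrange $d-1$ extra collinearities \emph{among the points of $\X_2$}. Your proposal asserts this can be done ``simultaneously for all $i$'' but does not exhibit the construction. The parameter count is in your favour for $d\ge 3$ (the $s(d-1)$ incidence conditions can be absorbed by passing suitable $M_{bc}$'s, with $c<d$, through the prescribed points $L_{id}\cap M_{ad}$), and likewise in the $s$-reduction for $d=2$, but you should actually write down one explicit assignment of which $M$-lines carry which incidences, and check that no $M_{bc}$ is asked to pass through three specified points and that no point of $\X_1^\flat\cup\X_2^\flat$ accidentally lands on a stripped line. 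Your last paragraph correctly flags this as the main obstacle; it is the whole content of the omitted proof, and until it is written out the argument is a plausible outline rather than a proof.
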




\begin{lem}\label{L:20121103-309} Let $\X$ and $\Y$ be star-configurations in $\P^2$ of type $(2,s)$ defined by general forms $F_1,\dots,F_s$ and  $G_1,\dots,G_s$ with $s\ge 3$, respectively. Assume that
$$
\deg(F_i)=\deg(G_i)=
\begin{cases}
1, & \text{for } i=1,\dots,\ell, \\
2, & \text{for } i=\ell+1,\dots,s,
\end{cases}
$$
with $0\le \ell<s$. Then 
$$
\begin{array}{lllllllllllllllllllllll} 
\dim _\k (I_{\X}+I_{\Y})_{2s-\ell-2}= 2(s-\ell).
\end{array} 
$$
\end{lem}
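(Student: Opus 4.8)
The strategy is to compute $\dim_\k(I_\X+I_\Y)_{2s-\ell-2}$ via the inclusion-exclusion formula
$$
\dim_\k(I_\X+I_\Y)_m = \dim_\k(I_\X)_m + \dim_\k(I_\Y)_m - \dim_\k(I_\X\cap I_\Y)_m,
$$
and to recognize that $I_\X\cap I_\Y = I_{\X\cup\Y}$, so the last term is $\dim_\k(I_{\X\cup\Y})_m = \dim_\k R_m - \H_{\X\cup\Y}(m)$. Here $m = 2s-\ell-2$. The two individual terms are under control: since $\deg(F_i)=\deg(G_i)$, the common degree is $d := \sum_i \deg(F_i) = 2s-\ell$, and by Proposition~\ref{P:20110720-205} both $\X$ and $\Y$ have generic Hilbert function with $\sigma(\X)=\sigma(\Y) = d-1 = 2s-\ell-1$ (Proposition~\ref{P:20111004-309}). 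Thus at $m = d-2 = 2s-\ell-2$ we have $\H_\X(m) = \H_\Y(m) = \binom{m+2}{2}$, i.e. $I_\X$ and $I_\Y$ have \emph{no} generators in degree $m$ — one step below the socle degree — so $\dim_\k(I_\X)_m = \dim_\k(I_\Y)_m = \binom{m+2}{2} - \binom{m+2}{2} = \dim_\k R_m - \binom{m+2}{2}$. Wait — more carefully, $\dim_\k(I_\X)_m = \dim_\k R_m - \H_\X(m) = \binom{m+2}{2} - \binom{m+2}{2}$ is wrong; rather $\dim_\k R_m = \binom{m+2}{2}$ and $\H_\X(m)=\binom{m+2}{2}$ would force $(I_\X)_m = 0$, which is false. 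The correct reading of Proposition~\ref{P:20110720-205} is $\H_\X(m) = \min\{\deg\X, \binom{m+2}{2}\}$, and for $m \le d-3$ this equals $\binom{m+2}{2}$ while at $m = d-2$... I need to recheck which regime $2s-\ell-2$ falls in.

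**Key computation to pin down.** The crux is locating $m = 2s-\ell-2$ relative to the "last" value of the Hilbert function. From Proposition~\ref{P:20110720-205}, writing $d = \sum d_j = 2s-\ell$, the Hilbert function of $\X^{(2,s)}$ is $1, \binom{3}{2}, \dots, \binom{2+(d-3)}{2}, \deg(\X^{(2,s)}), \to$, so it stabilizes at step $d-2$. Hence $\H_\X(d-3) = \binom{d-1}{2}$ and $\H_\X(d-2) = \deg\X$. So $m = d-2$ is exactly the first step where $\H_\X$ reaches $\deg\X$. Then $\dim_\k(I_\X)_m = \binom{m+2}{2} - \deg\X = \binom{d}{2} - \deg\X$. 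Using Theorem~\ref{T:20110219-204} the minimal free resolution of $R/I_\X$ tells us the generators of $I_\X$ sit in degrees $d-d_i$ and there are $s-1$ syzygies in degree $d$; from this one computes $\dim_\k(I_\X)_{d-2}$ directly, or equivalently $\binom{d}{2}-\deg\X$, and similarly for $\Y$. These two contribute $2(\binom{d}{2}-\deg\X)$ to the sum (using $\deg\X=\deg\Y$ since the degree sequences agree).

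**The main obstacle: the intersection term.** The real work is computing $\H_{\X\cup\Y}(m)$ at $m = 2s-\ell-2$, where $\X\cup\Y$ is the union of two star-configurations of type $(2,s)$ sharing the degree profile "$\ell$ ones and $s-\ell$ twos." I expect to handle this by a Bézout/general-position argument analogous to Lemmas~\ref{L:20111214-402} and Theorem~\ref{T:20111106-401}: a form $N \in (I_{\X\cup\Y})_m$ is forced, by vanishing on enough collinear points of $\X$, to be divisible by a long product of the linear factors of the $F_i$ and $G_j$, and then a dimension/degree count shows the space of such $N$ has dimension $\binom{d}{2} - (\text{something})$. The cleanest route is probably to invoke Proposition~\ref{P:20111231-501} (or its method) to get $(I_{\X\cup\Y})_{d} = 0$-type control and then descend one degree, or to set up the short exact sequences with auxiliary configurations as in the proof of Theorem~\ref{T:20111106-401}. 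Once $\H_{\X\cup\Y}(2s-\ell-2)$ is known — I anticipate it equals $\binom{d}{2} - 2(s-\ell)$, equivalently $\dim_\k(I_{\X\cup\Y})_m = 2(s-\ell)$ — assembling the three terms gives
$$
\dim_\k(I_\X+I_\Y)_m = 2\Big(\tbinom{d}{2}-\deg\X\Big) - \Big(\tbinom{d}{2} - \H_{\X\cup\Y}(m)\Big),
$$
and substituting the value of $\H_{\X\cup\Y}(m)$ collapses this to $2(s-\ell)$. The hard part is genuinely the general-position bookkeeping for $\X\cup\Y$; everything else is arithmetic driven by Propositions~\ref{P:20110720-205} and~\ref{P:20111004-309} and Theorem~\ref{T:20110219-204}.
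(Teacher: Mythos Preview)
Your framework is exactly the paper's: use the exact sequence $0\to R/I_{\X\cup\Y}\to R/I_\X\oplus R/I_\Y\to R/(I_\X+I_\Y)\to 0$ and compute the three Hilbert functions at $m=d-2=2s-\ell-2$. However, your anticipated value for the intersection term is wrong, and with it your final arithmetic does not close.

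The key fact, which the paper proves, is that $(I_{\X\cup\Y})_{2s-\ell-2}=\{0\}$, not that $\dim_\k(I_{\X\cup\Y})_{2s-\ell-2}=2(s-\ell)$. The paper establishes this by induction on $s$: for $\ell\ge 1$ the linear form $F_1$ vanishes at $2s-\ell-1$ points of $\X\cup\Y$ lying on the line $\FF_1$, so B\'ezout forces any $N\in (I_{\X\cup\Y})_{2s-\ell-2}$ to factor as $N=F_1N'$ with $N'\in (I_{\X'\cup\Y'})_{2(s-1)-(\ell-1)-2}$, where $\X',\Y'$ are the star-configurations built from $F_2,\dots,F_s$ and $G_2,\dots,G_s$; induction then gives $N'=0$. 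The base cases $\ell=0$ (all degree~$2$) are covered by Theorem~\ref{T:20111106-401} and Proposition~\ref{P:20111231-501}. Note that Proposition~\ref{P:20111231-501} as stated applies only to forms of equal degree, so for mixed degrees you genuinely need the inductive peeling-off argument.

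Once $(I_{\X\cup\Y})_m=0$ is known, the computation is immediate and simpler than you outlined: the minimal generators of $I_\X$ of lowest degree are exactly $\tilde F_{\ell+1},\dots,\tilde F_s$, each of degree $d-2$, so $\dim_\k(I_\X)_{d-2}=s-\ell$ directly (no need to compute $\deg\X$). Likewise $\dim_\k(I_\Y)_{d-2}=s-\ell$, and the exact sequence yields $\dim_\k(I_\X+I_\Y)_m=(s-\ell)+(s-\ell)-0=2(s-\ell)$. If you plug your guessed value $\dim_\k(I_{\X\cup\Y})_m=2(s-\ell)$ into your own formula together with $\binom{d}{2}-\deg\X=\dim_\k(I_\X)_m=s-\ell$, you get $2(s-\ell)-2(s-\ell)=0$, not $2(s-\ell)$; this is the concrete place where the proposal fails.
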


\begin{proof} We shall prove this by induction on $s$.  If $s\ge 3$ and $\ell=0$, then by Propositions~\ref{T:20111106-401}  and \ref{P:20111231-501}, and the exact sequence 
\begin{equation} \label{EQ:20121103-301} 
\begin{matrix}
0 & \ra & R/I_{\X\cup\Y} & \ra & R/I_\X \oplus R/I_\Y & \ra & R/(I_\X+I_\Y) & \ra & 0,
\end{matrix}
\end{equation}
we have
$$
\dim_\k(I_\X+I_\Y)_{2s-\ell-2}=\dim_\k(I_\X+I_\Y)_{2s-2}=2s=2(s-\ell).           
$$

Assume $1\le \ell <s$. If $s=3$, then using the same idea as in the proof of Theorem~\ref{T:20111106-401}, one can prove that
$$
\dim_\k(I_{\X\cup\Y})_{4-\ell}=\dim_\k(I_{\X\cup\Y})_{2s-\ell-2}=0,
$$
and thus 
$$
\dim_\k(I_\X+I_\Y)_{2s-\ell-2}=\dim_\k (I_\X+I_\Y)_{4-\ell}=2(3-\ell)=2(s-\ell).
$$
Now suppose $s>3$. 
Let $\FF_1$ be a line defined by a linear form $F_1$ and $N\in (I_{\X\cup\Y})_{2s-\ell-2}$. Notice that $F_1$ vanishes on $(2s-\ell-1)$-points on a line $\FF_1$. By {\em Bez\'{o}ut}'s Theorem, we have
$$
N=F_1 N'
$$
for some $N'\in R_{2s-\ell-3}$. Let $\X'$ and $\Y'$ be star-configurations in $\P^2$ of type $(2,s-1)$ defined by $F_2,\dots,F_s$ and $G_2,\dots,G_s$, respectively. Then, 
$$
\begin{array}{llllllllllllllll}
N' & \in & (I_{\X'\cup\Y'})_{2s-\ell-3} \\
   & =   & (I_{\X'\cup\Y'})_{2(s-1)-(\ell-1)-2} \\
   & =   & \{0\}, \quad (\text{by induction on $s$})
\end{array} 
$$
and thus $(I_{\X\cup\Y})_{2s-\ell-2}=\{0\}$ as well. Therefore, using equation~\eqref{EQ:20121103-301}, we get that
$$
\dim_\k (I_\X+I_\Y)_{2s-\ell-2}=2(s-\ell),
$$
which completes the proof of this lemma. 
\end{proof} 

\begin{rem} \label{R:20121110-308} Let $\X$ and $\Y$ be as in Lemma~\ref{L:20121103-309}. Since $(I_{\X\cup\Y})_{2s-\ell-2}=\langle \tilde F_{\ell+1},\dots,\tilde F_s, \tilde G_{\ell+1},\dots,\tilde G_s \rangle$ and by Lemma~\ref{L:20121103-309}, $\dim_\k (I_{\X\cup\Y})_{2s-\ell-2}=2(s-\ell)$, we see that the following $2(s-\ell)$-forms 
$$
 \tilde F_{\ell+1},\dots,\tilde F_s, \tilde G_{\ell+1},\dots,\tilde G_s 
$$
are linearly independent. 
\end{rem} 

We are now ready to prove the following theorem, which generalises  Theorem 3.7 in \cite{S:2}. 

\begin{thm}\label{T:20121102-208} Let $\X$  be a star-configuration in $\P^2$ of type $(2,s)$ defined by general forms $F_1,\dots,F_s$ and $\Y$ be a star-configuration in $\P^2$ of type $(2,s+1)$ defined by general forms $G_1,\dots,G_s$ and a general linear form $L$. Assume that 
$$
\deg(F_i)=\deg(G_i)=
\begin{cases}
1, & \text{for } i=1,\dots,\ell, \\
2, & \text{for } i=\ell+1,\dots,s,
\end{cases}
$$
where $0\le \ell <s$. Then an Artinian ring $R/(I_\X+I_\Y)$ has the weak-Lefschetz property with a Lefschetz element $L$. 
\end{thm}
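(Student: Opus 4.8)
The plan is to use Proposition~\ref{P:202-20100706}, exactly as in the proof of Proposition~\ref{P:20140322-411}: it suffices to produce an Artinian quotient $A$ of the coordinate ring $R/I_\Y$ such that $\H_A(i)=\H_\Y(i)$ for all $0\le i\le \sigma(\Y)-1$. The natural candidate is $A=R/(I_\X+I_\Y)$, which is a quotient of $R/I_\Y$ via the isomorphism $R/(I_\X+I_\Y)\simeq (R/I_\Y)/\big((I_\X+I_\Y)/I_\Y\big)$; one must first check that it is Artinian, which follows since $L$ is a general linear form so $\sqrt{I_\Y}\ni L$ together with the other forms cuts out $\emptyset$, hence $I_\X+I_\Y$ is $\mathfrak m$-primary. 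So the whole problem reduces to a Hilbert-function comparison.

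First I would record the relevant Hilbert functions. Since $\deg(F_i)=\deg(G_i)$ with $\ell$ ones and $s-\ell$ twos, Proposition~\ref{P:20111004-309} gives $\sigma(\X)=\ell+2(s-\ell)-1=2s-\ell-1$ and, because $\Y$ is built from the same degrees together with one extra linear form $L$, $\sigma(\Y)=\ell+1+2(s-\ell)-1=2s-\ell$. By Proposition~\ref{P:20110720-205} both $\X$ and $\Y$ have generic Hilbert function, so $\H_\X(i)=\H_\Y(i)=\binom{i+2}{2}$ for $0\le i\le 2s-\ell-3$ and these agree with $\H(R/(I_\X+I_\Y),i)$ trivially in that range. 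The only degree where something must be proved is $i=2s-\ell-2=\sigma(\Y)-1$: I must show
$$
\H(R/(I_\X+I_\Y),2s-\ell-2)=\H_\Y(2s-\ell-2).
$$
Equivalently, writing $D=2s-\ell-2$ and using $\dim_\k R_D=\binom{D+2}{2}$, this says $\dim_\k(I_\X+I_\Y)_D=\dim_\k(I_\Y)_D$, i.e. $(I_\X)_D\subseteq (I_\Y)_D$ in this single degree — or, more symmetrically, that adding the generators of $I_\X$ in degree $D$ contributes nothing new modulo $I_\Y$.

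The heart of the argument is the dimension count, and this is where Lemma~\ref{L:20121103-309} and Remark~\ref{R:20121110-308} do the work, though a small bridging argument is needed because Lemma~\ref{L:20121103-309} is stated for two type-$(2,s)$ configurations with the \emph{same} degree sequence, whereas here $\Y$ has type $(2,s+1)$. I would argue as follows. In degree $D=2s-\ell-2$, the space $(I_\X)_D$ is spanned by the $s-\ell$ generators $\tilde F_{\ell+1},\dots,\tilde F_s$ of degree $D$ (the $\tilde F_i$ coming from a degree-$1$ factor have degree $D-1$, so their multiples fill a $3(s-\ell)$-dimensional... actually one should just bound $\dim(I_\X)_D$ directly from the minimal free resolution of Theorem~\ref{T:20140307-303}, or invoke genericity). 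Meanwhile $(I_\X)_D+(I_\Y)_D=(I_{\X\cup\Y})_D$, which by Lemma~\ref{L:20121103-309} (applied to $\X$ and the type-$(2,s)$ sub-configuration $\Y'$ of $\Y$ defined by $G_1,\dots,G_s$, whose extra forms relative to $\Y$ involve $L$ and so vanish identically in degree $D<\deg$ of those new generators) has dimension $2(s-\ell)$. Comparing with $\dim(I_{\X}\cup I_\Y)_D=\dim(I_\X)_D+\dim(I_\Y)_D-\dim(I_\X\cap I_\Y)_D=\dim(I_\X)_D+\dim(I_\Y)_D-\dim(I_{\X\cup\Y})_D$ forces $\dim(I_\X+I_\Y)_D=\dim(I_\Y)_D$, which is exactly the needed equality. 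I expect the main obstacle to be precisely this bookkeeping: carefully matching the type-$(2,s+1)$ configuration $\Y$ against the type-$(2,s)$ hypothesis of Lemma~\ref{L:20121103-309}, and verifying that the extra generators of $I_\Y$ involving the linear form $L$ sit in degrees strictly larger than $D$ so that $(I_\Y)_D=(I_{\Y'})_D$; once that is nailed down, the remaining Hilbert-function comparison and the appeal to Proposition~\ref{P:202-20100706} are routine.
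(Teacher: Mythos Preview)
Your approach has a genuine gap. The plan is to apply Proposition~\ref{P:202-20100706} with the coordinate ring of $\Y$, i.e.\ to verify $\H_A(i)=\H_\Y(i)$ for all $0\le i\le \sigma(\Y)-1$ where $A=R/(I_\X+I_\Y)$. But this equality \emph{fails} already at $i=2s-\ell-2$. Indeed, $\sigma(\Y)=2s-\ell$ (your formula is right, though you then miscompute $\sigma(\Y)-1$), and by Proposition~\ref{P:20110720-205} the ideal $I_\Y$ has no elements in degree $2s-\ell-2$, so $\H_\Y(2s-\ell-2)=\binom{2s-\ell}{2}$. On the other hand $I_\X$ \emph{does} have elements in that degree: the $s-\ell$ minimal generators $\tilde F_{\ell+1},\dots,\tilde F_s$ (coming from the quadratic $F_j$'s) lie in degree $d_\X-2=2s-\ell-2$, so $\dim_\k(I_\X+I_\Y)_{2s-\ell-2}\ge s-\ell>0$ and hence $\H_A(2s-\ell-2)<\H_\Y(2s-\ell-2)$. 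Your claimed reduction ``$(I_\X)_D\subseteq (I_\Y)_D$'' is therefore false, and the bookkeeping with Lemma~\ref{L:20121103-309} cannot rescue it (that lemma gives $\dim(I_\X+I_{\Y'})_D=2(s-\ell)$, which is consistent with $\dim(I_\X)_D=s-\ell$, not $0$; note also that $I_{\X\cup\Y}=I_\X\cap I_\Y$, not $I_\X+I_\Y$).

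There is a second, independent gap: even if Proposition~\ref{P:202-20100706} applied, it would yield only the weak Lefschetz property, not that the specific form $L$ is a Lefschetz element --- which is part of the conclusion. The paper handles this in two steps: first, since $\sigma(\X)<\sigma(\Y)$, Theorem~\ref{T:20110720-303} gives WLP (this is essentially Proposition~\ref{P:202-20100706} applied to $\X$, not $\Y$, where the Hilbert functions \emph{do} match up to $\sigma(\X)-1$). Second, to identify $L$ as a Lefschetz element, the paper computes directly: it shows $\dim_\k(I_\X+I_\Y)_{2s-\ell-1}=4s-3\ell$ by exhibiting an explicit basis and proving linear independence, then analyzes $\ker(\times L)$ in degree $2s-\ell-2$ using Lemma~\ref{L:20121103-309} and Remark~\ref{R:20121110-308} to conclude that $\times L$ is surjective there, hence in all higher degrees. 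This direct analysis of the multiplication map is the missing ingredient in your proposal.
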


\begin{proof}
First, by Proposition~\ref{P:20111004-309}, we have $\sigma(\X)<\sigma(\Y)$, and hence
by Theorem~\ref{T:20110720-303},     $R/(I_\X+I_\Y)$ has the weak-Lefschetz property. It suffices to show that $L$ is a Lefschetz element. 

Note that the Hilbert function of $R/(I_\X+I_\Y)$ is of the form:
\begin{equation}\label{EQ:20110829-305}
\begin{array}{llllllllllllllllllllll}
\H_{R/(I_\X+I_\Y)}(-) & : & 1 & \ds\binom{1+2}{2}   & \cdots & \ds\binom{(2s-\ell-3)+2}{2}  & \ds\overset{(2s-\ell-2)\text{-nd}}{\binom{(2s-\ell-2)+2}{2}-(s-\ell)} & \cdots . 
\end{array}
\end{equation}

We shall show that
$$
\begin{matrix} 
\dim_\k (I_\X+I_\Y)_{2s-\ell-1}=4s-3\ell, \quad {\text{i.e.,}} \quad 
\H_{R/(I_\X+I_\Y)}(2s-\ell-1) =\binom{(2s-\ell-1)+2}{2}-(4s-3\ell).
\end{matrix} 
$$
Consider the following $(4s-3\ell)$-forms in $(I_\X+I_\Y)_{2s-\ell-1}$
$$
\begin{array}{lllllllllllllllllll}
\tilde F_1, \cdots,\tilde F_\ell, x_0\tilde F_{\ell+1},x_1\tilde F_{\ell+1}, x_2\tilde F_{\ell+1} ,\dots, x_0\tilde F_s,x_1\tilde F_s, x_2\tilde F_s, 
 L\tilde G_{\ell+1}, \dots,L\tilde G_s,
\end{array} 
$$
where
$$
\begin{array}{llllllllllllll}
\tilde F_i=\frac{\prod^s_{j=1} F_j}{F_i} \text{ and } 
\tilde G_i=\frac{\prod^s_{j=1} G_j}{G_i},
\end{array} 
$$
for $i=1,\dots,s$. 

Suppose that
\begin{equation}\label{EQ:20110829-306}
\begin{array}{lllllllllllllllllllll}
\alpha_1 \tilde F_1+ \cdots+\alpha_\ell \tilde F_\ell+
\alpha_{0\ell+1}x_0\tilde F_{\ell+1} +\alpha_{1\ell+1}x_1\tilde F_{\ell+1}+\alpha_{2\ell+1}x_2\tilde F_{\ell+1}+\cdots+ \\
\alpha_{0s}x_0\tilde F_s +\alpha_{1s}x_1\tilde F_s+\alpha_{2s}x_2\tilde F_s+ \beta_{\ell+1} L\tilde G_{\ell+1}+\cdots+\beta_s L \tilde G_s =0,
\end{array}
\end{equation}
where $\alpha_i, \alpha_{ij}, \beta_i\in \k$ for every $i,j$. 
Since $L$ is a linear factor of the form
$$
\alpha_1 \tilde F_1+ \cdots+\alpha_\ell \tilde F_\ell+
\alpha_{0\ell+1}x_0\tilde F_{\ell+1} +\alpha_{1\ell+1}x_1\tilde F_{\ell+1}+\alpha_{2\ell+1}x_2\tilde F_{\ell+1}+\cdots+ 
\alpha_{0s}x_0\tilde F_s +\alpha_{1s}x_1\tilde F_s+\alpha_{2s}x_2\tilde F_s,
$$
and $L$ is a non-zero divisor of $R/I_\X$, we see that
$$
\beta_{\ell+1}\tilde G_{\ell+1}+\cdots+\beta_s\tilde G_s\in (I_\X)_{2s-\ell-2}.
$$
Moreover,  by Lemma~\ref{L:20121103-309} (see also Remark~\ref{R:20121110-308})
$$
\beta_{\ell+1}=\cdots=\beta_s=0.
$$
Thus we rewrite equation~\eqref{EQ:20110829-306} as 
\begin{equation}\label{EQ:20110829-307}
\begin{array}{lllllllllllllllllllll}
\alpha_1 \tilde F_1+ \cdots+\alpha_\ell \tilde F_\ell+
\alpha_{0\ell+1}x_0\tilde F_{\ell+1} +\alpha_{1\ell+1}x_1\tilde F_{\ell+1}+\alpha_{2\ell+1}x_2\tilde F_{\ell+1}+\cdots+ \\
\alpha_{0s}x_0\tilde F_s +\alpha_{1s}x_1\tilde F_s+\alpha_{2s}x_2\tilde F_s
=0.
\end{array}
\end{equation}
Note that $F_j\mid \tilde F_i$ for every $j\ne i$. Hence
$$
F_1 \mid \alpha_1 \tilde F_1,
$$
and so $\alpha_1=0$. By the same method as above, one can show that $\alpha_1=\cdots=\alpha_\ell=0$. Moreover, since 
$$
F_{\ell+1} \mid (\alpha_{0\ell+1}x_0\tilde F_{\ell+1} +\alpha_{1\ell+1}x_1\tilde F_{\ell+1}+\alpha_{2\ell+1}x_2\tilde F_{\ell+1}),
$$
and $F_{\ell+1}\nmid \tilde F_{\ell+1}$, we get that 
$$
F_{\ell+1} \mid (\alpha_{0\ell+1}x_0 +\alpha_{1\ell+1}x_1+\alpha_{2\ell+1}x_2). 
$$
This implies that 
$$
\alpha_{0\ell+1}=\alpha_{1\ell+1}=\alpha_{2\ell+1}=0.
$$
By the same idea as above, one can show that
$$
\alpha_{ij}=0
$$
for every $0\le i\le 2$ and $\ell+1\le j\le s$, and thus
$$
\dim_\k (I_\X+I_\Y)_{2s-\ell-1}=4s-3\ell.
$$
In other words, the Hilbert function of $R/(I_\X+I_\Y)$ in degree $2s-\ell-1$ is 
\begin{equation}\label{EQ:20110829-308}
\begin{matrix} 
\H_{R/(I_\X+I_\Y)}(2s-\ell-1)={\binom{(2s-\ell-1)+2}{2}-(4s-3\ell)}.
\end{matrix} 
\end{equation}

\medskip

Now we shall show that $L$ is a Lefschetz element of $R/(I_\X+I_\Y)$. Note that
$$
\begin{array}{llllllllllllllllllll}
I_\X+I_\Y
& = & (\tilde F_1,\dots,\tilde F_s, L\tilde G_1,\dots, L\tilde G_s, \prod^s_{i=1}G_i). 
\end{array} 
$$
Consider a multiplication map by $L$
\begin{equation}\label{EQ:20110726-002} 
\times L : (R/(I_\X+I_\Y))_{2s-\ell-2} \ra (R/(I_\X+I_\Y))_{2s-\ell-1}
\end{equation} 
and let $N \in {\rm Ker}(\times L)$. Then
$$
\begin{array}{llllllllllllllll}
N\cdot L
& \in & (I_\X+I_\Y)_{2s-\ell-1} \\
& = & \langle
\tilde F_1, \cdots,\tilde F_\ell, x_0\tilde F_{\ell+1},x_1\tilde F_{\ell+1}, x_2\tilde F_{\ell+1} ,\dots, x_0\tilde F_s,x_1\tilde F_s, x_2\tilde F_s, 
 L\tilde G_{\ell+1}, \dots,L\tilde G_s\rangle.
\end{array} 
$$
This implies that for some $\alpha_1,\dots,\alpha_\ell \in \k$, $K_{\ell+1},\dots,K_s\in R_1$, and $\beta_{\ell+1},\dots,\beta_s\in \k$, 
$$
\begin{array}{llllllllllllllllllll}
N\cdot L 
& = & {\ds \sum}^\ell_{i=1} \alpha_i \tilde F_i +{\ds\sum}^s_{i=\ell+1} K_i  \tilde F_i +{\ds \sum}^s_{i=\ell+1} \beta_i L \tilde G_i ,
\end{array} 
$$
i.e.,
$$
\begin{matrix} 
\Big(N-{\ds\sum}^s_{i=\ell+1} \alpha_i \tilde G_i\Big)\cdot L  
 =  {\ds\sum}^\ell_{i=1} \alpha_i \tilde F_i +{\ds\sum}^s_{i=\ell+1} K_i  \tilde F_i
 \in  I_\X.
 \end{matrix} 
$$
In other words,  $N-\sum^s_{i=\ell+1} \alpha_i \tilde G_i$ is contained in the kernel of the multiplication map by $L$
$$
\times L : (R/I_\X)_{2s-\ell-2} \ra (R/I_\X)_{2s-\ell-1}.
$$
Since $L$ is a general linear form in $R_1$, this multiplication map by $\times L$
$$
(R/I_\X)_{i} \ra (R/I_\X)_{i+1}
$$
is injective for all $i\ge 0$. It  follows that
$$
N-\sum^s_{i=\ell+1} \alpha_i \tilde G_i\in (I_\X)_{2s-\ell-2},
$$
and so
$$
N\in (\tilde F_{\ell+1},\dots,\tilde F_s,\tilde G_{\ell+1},\dots,\tilde G_s)_{2s-\ell-2}. 
$$
Moreover, since $\tilde F_{\ell+1}, \dots, \tilde F_s \in (I_\X+I_\Y)_{2s-\ell-2}$, but $\tilde G_{\ell+1},\dots, \tilde G_s \notin (I_\X+I_\Y)_{2s-\ell-2}$, we get that
$$
\dim_\k [{\rm Ker}(\times L)]_{2s-\ell-2}=s-\ell.
$$
Hence
$$
\begin{array}{lllllllllllllllllll}
\dim_\k [{\rm Im} (\times L)]_{2s-\ell-1}
& = & \H_{R/(I_\X+I_\Y)}(2s-\ell-2)-\dim_\k [{\rm Ker}(\times L)]_{2s-\ell-2} \\[.3ex]
& = & \left[\binom{2s-\ell-2)+2}{2}-(s-\ell)\right]-(s-\ell) \\[.5ex]
& = & \binom{(2s-\ell-1)+2}{2}-(2s-\ell)-2(s-\ell) \\[.5ex]
& = & \binom{(2s-\ell-1)+2}{2}-(4s-3\ell) \\[.5ex]
& = & \H_{R/(I_\X+I_\Y)}(2s-\ell-1), \qquad   (\text{by equation~\eqref{EQ:20110829-308}}).
\end{array}
$$
This indicates that  the multiplication map by $L$ in equation~\eqref{EQ:20110726-002} is surjective. 

\medskip

Now consider the following exact sequence:
$$
\begin{matrix}
0 & \ra & ((I_\X+I_\Y:L)/(I_\X+I_\Y))_{d-1} & \ra & A_{d-1} & \overset{\times L}{\ra} & A_{d} & \ra & (A/LA)_d & \ra & 0.
\end{matrix}
$$
Since the multiplication map by $L$  is surjective in degree $2s-\ell-2$, we have 
$$
\dim_\k(A/LA)_d=0
$$
for $d\ge 2s-\ell-1$. This means  the multiplication map by $L$ is surjective in degrees $d\ge 2s-\ell-2$. Therefore, $L$ is a Lefschetz element of $A$. This completes the proof. 
\end{proof}

\section*{Acknowdegement}
The authors would like to express their sincere appreciation to Professor Anthony V. Geramita and Professor Jeaman Ahn for valuable discussions.

\end{document}